\documentclass[12pt,reqno,centertags]{amsart}

\usepackage[utf8]{inputenc}
\usepackage[T1]{fontenc}
\usepackage[american]{babel}
\usepackage[
  babel=true,
  expansion=alltext,
  protrusion=alltext-nott,
  final
]{microtype}
\usepackage[sb]{libertine}
\usepackage[varqu,varl]{inconsolata}
\usepackage[libertine]{newtxmath}
\useosf

\usepackage{amsthm}
\usepackage[margin=3cm,marginparwidth=2.5cm]{geometry}
\usepackage[svgnames]{xcolor}
\usepackage[breaklinks=true,pagebackref]{hyperref}

\hypersetup{
  pdftitle={An improved nonexistence bound for the liquid drop model},
  pdfauthor={Marvin R. Schulz},
  pdfsubject={Mathematics},
  pdfkeywords={}
}

\newcommand{\Sph}{\mathbb{S}}
\newcommand{\R}{\mathbb{R}}
\newcommand{\cB}{\mathcal{B}}
\newcommand{\cE}{\mathcal{E}}
\newcommand{\cF}{\mathcal{F}}
\newcommand{\cH}{\mathcal{H}}
\newcommand{\abs}[1]{\left\lvert #1\right\rvert}

\allowdisplaybreaks

\newtheorem{theorem}{Theorem}[section]
\newtheorem{proposition}[theorem]{Proposition}
\newtheorem{lemma}[theorem]{Lemma}

\theoremstyle{definition}

\theoremstyle{remark}
\newtheorem{remark}[theorem]{Remark}

\numberwithin{equation}{section}

\title{An improved nonexistence bound for the liquid drop model}
\author{Marvin R. Schulz}
\address{Department of Mathematical Sciences, University of Copenhagen, Universitetsparken 5, 2100 Copenhagen, Denmark}
\email{masc@math.ku.dk}
\date{\today}
\keywords{
Gamow liquid drop model,
nonlocal isoperimetric problem,
capillarity inequality,
Coulomb energy
}
\subjclass[2020]{49Q10, 49Q20}

\begin{document}

\begin{abstract}
	For Gamow's liquid drop model, we improve the nonexistence bound by Frank--Killip--Nam. The key additional input is a geometric perimeter inequality that follows from the capillarity problem of liquids. This yields a quantitative gain in the previously used averaged slicing argument. We prove that the variational problem admits no minimizer for \(A\geq 7.5\).
\end{abstract}
\maketitle

{\hypersetup{linkcolor=black}
	\tableofcontents}
\section{Introduction and Main Theorem}\label{sec:main_theorem}
The liquid drop model of atomic nuclei was introduced and studied in the works of  Gamow \cite{Gamow:1930}, von Weizsäcker \cite{Weizsaecker:1935}, and Bohr \cite{Bohr:1936} in the 1930s.
For a measurable set \(\Omega\subset\R^3\) of finite perimeter \(P(\Omega)\) (in the sense of De Giorgi), consider Gamow's liquid-drop energy
\begin{equation}\label{eq:energy}
	\cE(\Omega)=P(\Omega)+D(\Omega),
	\qquad
	D(\Omega)
	=\frac12\iint_{\Omega\times\Omega}\frac{dx\,dy}{\abs{x-y}}.
\end{equation}
At fixed volume \(A>0\), the corresponding variational problem is
\begin{equation}\label{eq:variational-problem}
	E(A)=\inf\bigl\{\cE(\Omega):
	\Omega\subset\R^3\ \text{measurable},\ \abs{\Omega}=A\bigr\}, \text{ and set } E(0):=0.
\end{equation}
The main theorem is
\begin{theorem}\label{thrm:main}
	If \(E(A)\) is attained, then \(A<7.5\).
\end{theorem}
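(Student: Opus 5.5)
We follow the Frank--Killip--Nam averaged slicing scheme and feed in one extra geometric ingredient, a lower bound on the perimeter of the pieces produced by slicing.

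\textbf{Step 1: the cut inequality.} Let \(\Omega\) be a minimizer with \(\abs{\Omega}=A\). Cut it by a plane \(H=\{x\cdot\nu=\lambda\}\) into \(\Omega^\pm\). Translating \(\Omega^+\) and \(\Omega^-\) infinitely far apart gives a competitor, so
\begin{equation*}
E(A)\le P(\Omega^+)+D(\Omega^+)+P(\Omega^-)+D(\Omega^-).
\end{equation*}
Since \(P(\Omega^+)+P(\Omega^-)=P(\Omega)+2\cH^2(\Omega\cap H)\), minimality yields
\begin{equation*}
\iint_{\Omega^+\times\Omega^-}\frac{dx\,dy}{\abs{x-y}}\le 2\,\cH^2(\Omega\cap H).
\end{equation*}
We integrate this over \(\lambda\in\R\) and over directions \(\nu\in\Sph^2\). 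The kernel identity \(\int_{\Sph^2}\abs{(x-y)\cdot\nu}\,d\nu=2\pi\abs{x-y}\) gives
\begin{equation*}
\frac{\pi}{2}\,A^2\le 8\pi A ,
\end{equation*}
hence Frank--Killip--Nam's bound \(A\le 8\). To reach \(7.5\) the right-hand side must be improved.

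\textbf{Step 2: the improvement.} In Step 1 we used only the comparison of \(\Omega\) with two separated pieces. Instead, we also exploit that each piece is inefficient. The new input, from the capillarity problem, is a relative isoperimetric inequality for the half-space: for a set \(F\) lying on one side of \(H\) with flat face \(F\cap H\),
\begin{equation*}
P(F)-\cH^2(F\cap H)\ \ge\ \text{(capillary profile bound)}.
\end{equation*}
The optimal sets here are spherical caps meeting \(H\) at angles determined by the wetting coefficient. Concretely, \(\cH^2(\partial\Omega\cap\{x\cdot\nu>\lambda\})\) is bounded below in terms of the cross-section area \(a(\lambda)=\cH^2(\Omega\cap H)\) and the volume \(v(\lambda)=\abs{\Omega^+}\). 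Integrating in \(\lambda\) then controls \(\int a\,d\lambda = A\) against a portion of \(P(\Omega)\) more efficiently than the trivial estimate.

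We would combine this with an upper bound on \(P(\Omega)\), which holds because \(\Omega\) is a minimizer and so \(P(\Omega)\le E(A)\le\) the energy of a competitor, say one or two balls. We would also use a lower bound on \(D(\Omega)\) or \(D(\Omega^\pm)\) from Riesz rearrangement (balls maximize \(D\)). Finally we would optimize the weight in a mixed averaging, for instance weighting by \(\lambda\) or cutting only near the median plane, to obtain an inequality of the form \(A^2\le cA+\text{lower order}\) with effective \(c<8\) that forces \(A<7.5\).

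\textbf{Main obstacle.} The hardest step is Step 2: turning the capillarity inequality into a quantitative gain that survives the averaging over \(\lambda\) and \(\nu\) without losing constants. We would first try a convexity argument in \(\lambda\) using the explicit cap profile. The final numerical verification that the constants give exactly \(7.5\) is tedious but routine.
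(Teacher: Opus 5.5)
Your Step 1 is the Frank--Killip--Nam argument, and you correctly name the capillarity inequality as the new input. There is a factor slip: $\int_{\Sph^2}((x-y)\cdot\nu)_+\,d\nu=\pi\abs{x-y}$, so the average gives $\pi A^2\le 8\pi A$; your $\frac{\pi}{2}A^2\le 8\pi A$ would only give $A\le 16$. The real problem is Step 2, which is a list of ingredients rather than an argument, and the mechanism that produces the gain is missing.

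The averaging controls only $\int_{\Sph^2}\int_\R g_{\nu,t}\,dt\,d\nu$ for the deficit $g_{\nu,t}=2\sigma_{\nu,t}-I_{\nu,t}\ge0$, and the shape of $\Omega$ is unknown. So the paper proves a pointwise bound $g_{\nu,t}\ge\sigma_{\nu,t}L(s_{\nu,t};A)$ for slices with $s_{\nu,t}A\le1$. This bound is linear in the cross-section, with a coefficient depending only on the volume fraction. The substitution $u=a^+_{\nu,t}$, $du=-\sigma_{\nu,t}\,dt$ then makes the $t$-integral shape independent, equal to $2A\int_0^{1/A}L(s;A)\,ds$, and yields $A\le 8-8\int_0^{1/A}L(s;A)\,ds$.

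Capillarity by itself cannot produce such a bound. With $x=\sigma/\beta(sA)$, the perimeter surplus of the small piece over the ball that it guarantees is $\beta(sA)(Q(x)-1)\le\frac{4\sigma^2}{3\beta(sA)}$, which is quadratic in $\sigma$. Dividing by $\sigma$ and taking the infimum over the unknown ratio $x$ therefore gives $0$. Once the Coulomb energy is accounted for, the available bound is even negative for small $x$.

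The paper covers this regime with a second, independent gain that you do not have. It is the binding term $E(sA)+E((1-s)A)-E(A)$ in the decomposition of $g_{\nu,t}$. It is bounded below via the Frank--Nam inequality $E(\theta A)\ge\theta^{5/3}E(A)+(1-\theta)\theta^{2/3}\beta(A)$ together with a three-ball upper bound on $E(A)$. That is where a competitor bound enters, not as a bound on $P(\Omega)$. This estimate needs the exact value $E(sA)=\beta(sA)(1+\frac{sA}{5})$, which is available only for $sA\le1$ by Chodosh--Ruohoniemi. That is why the gain is taken only from slices cutting off a small volume. Near the median plane, which you suggest, the pieces have volume about $A/2$ and nothing comparable is known.

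Two further points. First, Riesz rearrangement says balls \emph{maximize} $D$ at fixed volume, so it gives upper, not lower, bounds on $D(\Omega^\pm)$. The lower bound that keeps the Coulomb term from cancelling the perimeter surplus is $3P(F)^2D(F)\ge16\pi\abs{F}^3$. It is used together with the monotonicity of $p\mapsto p+\frac{16\pi(sA)^3}{3p^2}$ on $p\ge\beta(sA)$.

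Second, the conclusion is not routine bookkeeping. Because $L$ depends on $A$, the inequality $A\le 8-8\int_0^{1/A}L(s;A)\,ds$ is implicit. It is closed by a Lipschitz bound on $L(s;\cdot)$ plus the verification $\int_0^{1/8}L(s;7.5)\,ds>\frac{1}{16}$. That verification requires choosing the capillarity parameter $\lambda$ as a function of $s$, so that the binding term and the capillarity term combine into an explicit lower bound on $L$. The value $7.5$ is a threshold these estimates clear with a small margin, not something the constants produce exactly.
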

\begin{remark}
	The proof of this theorem is given in Section~\ref{sec:Proof_main_theorem}.
\end{remark}
Theorem~\ref{thrm:main} improves the bound \(A\leq 8\) found by Frank--Killip--Nam in \cite{FKN:2016}. Note that the perimeter \(P(\Omega)\) is minimized by spheres, whereas the Coulomb interaction favors splitting into distant components.  It is conjectured that \(E(\cdot)\) is minimized by a ball up to the threshold where splitting into two balls is favourable over one ball which is
\begin{equation*}
	A_c = 5\cdot \frac{2-2^{2/3}}{2^{2/3}-1} \approx 3.512.
\end{equation*}
In \cite{CR:2024} by Chodosh and Ruohoniemi, it was recently proven that for \(A\leq 1\), the minimizers are indeed balls. The remainder of this paper is organized as follows. In Section~\ref{sec:FKN_slicing} we recall the proof of Frank--Killip--Nam and identify the step that we improve. In Section~\ref{sec:cap_ineq} we discuss the geometric observations for cutting planes with hypersurfaces following from the capillarity problem and prove in Section~\ref{sec:cut_def_neq} an auxiliary inequality, which is then used in the proof of the main Theorem in Section~\ref{sec:Proof_main_theorem}.
\section{Averaged Slicing Argument}\label{sec:FKN_slicing}
We recall the result of Frank--Killip--Nam.  In the following, \(\cH^2\) denotes the two-dimensional Hausdorff measure, and \(\partial^*\) denotes the reduced boundary. We denote by \(\Omega^{(1)}\) the set of points of density one of
\(\Omega\). Throughout the remainder we assume that \(\Omega\subset \R^3\) with finite perimeter is the minimizer of \(\cE\) with \(\abs{\Omega}=A\) and define for \(\nu\in \Sph^2\) and \(t\in \R\)
\begin{equation*}
	H_{\nu,t} =\{x\in \R^3: x\cdot \nu-t=0\}, \quad H_{\nu,t}^\pm =\{x\in \R^3: \pm(x\cdot \nu-t)>0\}.
\end{equation*}
Let
\begin{equation}\label{eq:symbols_defined}
	\Omega^\pm_{\nu,t} = \Omega \cap H_{\nu,t}^\pm, \quad a^\pm_{\nu,t} = \abs{\Omega^\pm_{\nu,t}}, \quad  \sigma_{\nu,t} = \cH^2(\Omega^{(1)} \cap H_{\nu,t}),
\end{equation}
and define
\begin{equation*}
	s_{\nu,t} = \frac{\min\{a_{\nu,t}^+,a_{\nu,t}^-\}}{A} \in [0,1/2] .
\end{equation*}
We aim to study the following deficit produced by cutting the set in two parts, in particular, we define
\begin{equation}\label{def:deficit}
	g_{\nu,t}
	:=
	\cE(\Omega_{\nu,t}^{+})
	+\cE(\Omega_{\nu,t}^{-})
	-\cE(\Omega),
\end{equation}
and call it the cutting deficit (at \(H_{\nu,t}\)).
\begin{proposition}[Frank--Killip--Nam \cite{FKN:2016}]
	\label{prop:FKN-bound}
	If \(E(A)\) is attained, then \(A\leq 8\).
\end{proposition}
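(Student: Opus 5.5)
The plan is to combine the minimality of \(\Omega\) with the cutting deficit \eqref{def:deficit} to show \(g_{\nu,t}\geq 0\), to compute \(g_{\nu,t}\) explicitly, and then to average over all planes \(H_{\nu,t}\), \(\nu\in\Sph^2\), \(t\in\R\).

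\emph{Step 1: nonnegativity of the deficit.} Fix \(\nu,t\) and set \(\Omega_R=\Omega^+_{\nu,t}\cup(\Omega^-_{\nu,t}+R\nu')\) for a fixed unit vector \(\nu'\) and \(R\to\infty\). For \(R\) large the two pieces are disjoint with positive distance, so \(\abs{\Omega_R}=A\) and \(P(\Omega_R)=P(\Omega^+_{\nu,t})+P(\Omega^-_{\nu,t})\). The Coulomb energy splits as
\[
D(\Omega_R)=D(\Omega^+_{\nu,t})+D(\Omega^-_{\nu,t})+\iint_{\Omega^+_{\nu,t}\times(\Omega^-_{\nu,t}+R\nu')}\frac{dx\,dy}{\abs{x-y}} .
\]
The cross term tends to \(0\) as \(R\to\infty\). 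This holds first for bounded pieces; in general it follows by dominated convergence, since the cross term is bounded uniformly in \(R\) by \(C A^{5/3}\) via Hardy–Littlewood–Sobolev or the bathtub bound. Minimality of \(\Omega\) gives \(\cE(\Omega)\le\cE(\Omega_R)\). Letting \(R\to\infty\) yields \(g_{\nu,t}\ge 0\) for every \(\nu,t\).

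\emph{Step 2: explicit form of \(g_{\nu,t}\).} By the slicing theory of sets of finite perimeter (De Giorgi structure theorem plus coarea), for each \(\nu\) and a.e. \(t\) we have
\[
P(\Omega^\pm_{\nu,t})=\cH^2(\partial^*\Omega\cap H^\pm_{\nu,t})+\sigma_{\nu,t},
\qquad \cH^2(\partial^*\Omega\cap H_{\nu,t})=0 .
\]
Hence \(P(\Omega^+_{\nu,t})+P(\Omega^-_{\nu,t})=P(\Omega)+2\sigma_{\nu,t}\). Also \(D(\Omega)=D(\Omega^+_{\nu,t})+D(\Omega^-_{\nu,t})+\iint_{\Omega^+_{\nu,t}\times\Omega^-_{\nu,t}}\abs{x-y}^{-1}\,dx\,dy\). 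Therefore
\[
0\le g_{\nu,t}=2\sigma_{\nu,t}-\iint_{\Omega\times\Omega}\frac{\mathbf 1\{y\cdot\nu<t<x\cdot\nu\}}{\abs{x-y}}\,dx\,dy .
\]
This slicing identity is the only technically delicate point. I would take it from standard references, e.g. Maggi's book, since it is needed only for a.e. \(t\).

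\emph{Step 3: averaging.} Integrate in \(t\in\R\). By Fubini, \(\int_\R\sigma_{\nu,t}\,dt=A\), and \(\int_\R \mathbf 1\{y\cdot\nu<t<x\cdot\nu\}\,dt=((x-y)\cdot\nu)_+\). Then average over \(\nu\in\Sph^2\) with the normalized surface measure. For a unit vector \(e\) one has \(\fint_{\Sph^2}(e\cdot\nu)_+\,d\nu=\tfrac12\fint_{\Sph^2}\abs{e\cdot\nu}\,d\nu=\tfrac14\), so \(\fint_{\Sph^2}((x-y)\cdot\nu)_+\,d\nu=\abs{x-y}/4\). The singular kernel thus cancels exactly, and we obtain
\[
0\le 2A-\frac14\iint_{\Omega\times\Omega}dx\,dy=2A-\frac{A^2}{4}.
\]
This gives \(A\le 8\).

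The main obstacle is bookkeeping rather than ideas: justifying the perimeter slicing formula for a.e. \(t\), and the vanishing of the cross term for possibly unbounded minimizers. Both are standard.
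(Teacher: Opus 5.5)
Your proposal follows the paper's proof. The same cutting identities give $g_{\nu,t}=2\sigma_{\nu,t}-I_{\nu,t}$ for a.e.\ $t$, and integrating in $t$ and averaging over $\nu$ with $\fint_{\Sph^2}(e\cdot\nu)_+\,d\nu=\tfrac14$ yields $0\le 2A-A^2/4$. The only deviation is Step~1. The paper obtains $g_{\nu,t}\ge 0$ from $\cE(\Omega^\pm_{\nu,t})\ge E(a^\pm_{\nu,t})$ together with subadditivity of $E$. You instead rerun the translation argument behind subadditivity directly on the two pieces.

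As written, that step has a small hole. For an arbitrary fixed $\nu'$, you claim that $\Omega^+_{\nu,t}$ and $\Omega^-_{\nu,t}+R\nu'$ are disjoint with positive distance for large $R$. This presupposes that the pieces are bounded, which you never establish. For a general unbounded set of finite perimeter and $\nu'\cdot\nu>0$, the translate can overlap $\Omega^+_{\nu,t}$ in positive measure for every $R$. Then $\abs{\Omega_R}<A$ and $\Omega_R$ is not an admissible competitor. Also, a uniform bound on the cross term is not a dominating function, so dominated convergence does not apply as stated.

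Both issues disappear if you take $\nu'=-\nu$. Then $\Omega^-_{\nu,t}-R\nu\subset H^-_{\nu,t-R}$ is separated from $\Omega^+_{\nu,t}\subset H^+_{\nu,t}$ by a slab of width $R$. So volume and perimeter add exactly, with no boundedness assumption. Moreover $(x-y)\cdot\nu>R$ for all $x\in\Omega^+_{\nu,t}$ and $y\in\Omega^-_{\nu,t}-R\nu$, so the cross term is at most $\abs{\Omega^+_{\nu,t}}\,\abs{\Omega^-_{\nu,t}}/R\to 0$. With this choice, your Step~1 is a complete, self-contained substitute for the appeal to subadditivity, and the rest of your argument goes through unchanged.
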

\begin{proof}
	The proof is by contradiction and proceeds as follows: Define
	\begin{equation*}
		I_{\nu,t}
		=
		\iint_{\Omega_{\nu,t}^{+}\times\Omega_{\nu,t}^{-}}
		\frac{dx\,dy}{\abs{x-y}}.
	\end{equation*}
	The cutting formulas for perimeter and Coulomb energy give (for a.e. \(t\in \R\))
	\begin{equation*}
		P(\Omega_{\nu,t}^{+})+P(\Omega_{\nu,t}^{-})
		=
		P(\Omega)+2\sigma_{\nu,t},
	\end{equation*}
	and
	\begin{equation*}
		D(\Omega_{\nu,t}^{+})+D(\Omega_{\nu,t}^{-})
		=
		D(\Omega)-I_{\nu,t}.
	\end{equation*}
	Consequently, for the cutting deficit
	\begin{equation}\label{eq:deficit}
		g_{\nu,t}
		=
		\cE(\Omega_{\nu,t}^{+})
		+\cE(\Omega_{\nu,t}^{-})
		-\cE(\Omega)
		=
		2\sigma_{\nu,t}-I_{\nu,t}.
	\end{equation}
	By minimality of \(\Omega\) and subadditivity of \(E\) it is
	\begin{equation*}
		\begin{aligned}
			\cE(\Omega_{\nu,t}^{+})
			+\cE(\Omega_{\nu,t}^{-})
			 & \geq E(a_{\nu,t}^+)+E(A-a_{\nu,t}^+) \\
			 & \geq E(A)
			=\cE(\Omega).
		\end{aligned}
	\end{equation*}
	Consequently, \(g_{\nu,t}\geq 0\). First integrating over \(t\in \R\) it follows by Cavalieri's principle
	\begin{equation*}
		\int_\R g_{\nu,t} dt = 2A - \int_{\R}I_{\nu,t}\,dt
		=2A-
		\iint_{\Omega\times\Omega}
		\frac{(\nu\cdot(x-y))_{+}}{\abs{x-y}}\,dx\,dy.
	\end{equation*}
	Taking now the average over \(\nu \in \Sph^2\) yields
	\begin{equation}\label{eq:main_FKN}
		\fint_{\Sph^2} \int_\R g_{\nu,t} dt d\nu = 2A - \frac{A^2}{4} = \frac{A(8-A)}{4} .
	\end{equation}
	Since the left-hand side above is nonnegative, the result follows.
\end{proof}
\begin{remark}
	We aim to prove a positive lower bound on the cutting deficit \(g_{\nu,t}\) which is comparable to \(\sigma_{\nu,t}\). After interchanging the two half-spaces if necessary, we assume that
\[
    |\Omega_{\nu,t}^+|=s_{\nu,t}A,
    \qquad
    |\Omega_{\nu,t}^-|=(1-s_{\nu,t})A.
\]
	The cutting deficit \(g_{\nu,t}\) then does decompose into three terms
	\begin{equation}\label{eq:g_def_dec}
		\begin{split}
			g_{\nu,t} = & [\cE(\Omega_{\nu,t}^+)- E(s_{\nu,t}A)]      \\
			+           & [\cE(\Omega_{\nu,t}^-)- E((1-s_{\nu,t})A)]  \\
			+           & [E(s_{\nu,t}A) + E((1-s_{\nu,t})A) - E(A)].
		\end{split}
	\end{equation}
	We will show that they cannot always vanish at the same time using the capillarity isoperimetric inequalities discussed in the next section.
\end{remark}
\section{Capillarity Inequalities}\label{sec:cap_ineq}
A drop of liquid resting on a flat surface \(H_{\nu,t}\) minimizes the competition between its free surface and the contact area with the plane. This behavior is described by the following capillarity functional
\begin{equation*}
	\cF_\lambda(F)
	=
	P(F;H_{\nu,t}^+)
	-
	\lambda
	\cH^2(\partial^*F\cap H_{\nu,t}),
	\qquad
	\lambda\in(-1,1),
\end{equation*}
defined for \(F\subset H_{\nu,t}^+\) with finite perimeter. Here \(P(F;H_{\nu,t}^+)\) denotes the relative perimeter of \(F\) inside the
half-space \(H_{\nu,t}^+\) and
\begin{equation*}
	\cH^2(\partial^*F \cap H_{\nu,t})
\end{equation*}
is the contact area of \(F\) with \(H_{\nu,t}\). The parameter \(\lambda\) measures the preference of
liquid adhered to the supporting plane. The limiting cases
\(\lambda=-1\) and \(\lambda=1\) correspond to complete non-wetting and complete
wetting, respectively, while \(\lambda=0\) represents the neutral case, where the liquid touches the surface at \(90\)--degree angle.

The minimizers of the capillarity problem under fixed volume are spherical caps, and the variational characterization yields a sharp inequality for every finite-perimeter set. More precisely, there exists the following isoperimetric inequality
\begin{lemma}\label{lem:cap_iso}
	Let \(F\subset H_{\nu,t}^+\) be measurable with finite perimeter and \(\abs{F}<\infty\), then
	\begin{equation*}
		\cF_\lambda(F) \geq  \left(\frac{2-3\lambda+\lambda^3}{4}\right)^{1/3}\beta(\abs{F}).
	\end{equation*}
	where \(\beta(V):=(36\pi)^{1/3}V^{2/3}\) denotes the perimeter of a ball of volume \(V\).
\end{lemma}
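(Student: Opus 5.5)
The plan is to reduce the inequality to an anisotropic isoperimetric inequality in a half-space and prove that by the Knothe/Brenier transport argument (Gromov's proof, in the form used by Figalli--Indrei for convex cones). By translation and rotation invariance we assume \(t=0\), so \(F\subset H^+:=H_{\nu,0}^+\). By a standard approximation we may also assume \(F\) is bounded with smooth enough boundary. Write \(\Sigma=\partial^*F\cap H^+\) for the free part and \(C=\partial^*F\cap H_{\nu,0}\) for the contact part; on \(C\) the outer normal is \(\nu_F=-\nu\).

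\textbf{Step 1: rewrite \(\cF_\lambda\) as an integral over the free part.} Apply the divergence theorem to the constant field \(\nu\) on \(F\):
\[
0=\int_{\partial^*F}\nu\cdot\nu_F\,d\cH^2=\int_{\Sigma}\nu\cdot\nu_F\,d\cH^2-\cH^2(C).
\]
Hence \(\cF_\lambda(F)=\int_\Sigma \Phi(\nu_F)\,d\cH^2\), where \(\Phi(n)=\abs{n}-\lambda\, n\cdot\nu\). This \(\Phi\) is exactly the support function \(h_K(n)=\sup_{y\in K}y\cdot n\) of the closed unit ball \(K=\overline{B_1(-\lambda\nu)}\).

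\textbf{Step 2: the comparison set and the transport inequality.} Let \(W=K\cap\{y\cdot\nu\geq 0\}\). This is a spherical cap of height \(1-\lambda\), so
\[
\abs{W}=\frac{\pi(1-\lambda)^2(2+\lambda)}{3}=\frac{\pi(2-3\lambda+\lambda^3)}{3}.
\]
Let \(T\colon F\to W\) be the Brenier map, or the Knothe map, pushing \(\abs{F}^{-1}\mathcal L^3\llcorner F\) forward to \(\abs{W}^{-1}\mathcal L^3\llcorner W\). Its derivative has nonnegative eigenvalues and \(\det\nabla T=\abs{W}/\abs{F}\). The AM--GM inequality gives \(\operatorname{div}T\geq 3(\abs{W}/\abs{F})^{1/3}\). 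Integrating over \(F\) and using the divergence theorem yields
\[
3\abs{W}^{1/3}\abs{F}^{2/3}\le \int_\Sigma T\cdot\nu_F\,d\cH^2+\int_C T\cdot(-\nu)\,d\cH^2 .
\]
Since \(T\) takes values in \(W\subset K\), we have \(T\cdot\nu_F\le h_K(\nu_F)=\Phi(\nu_F)\) on \(\Sigma\). Since \(W\subset\{y\cdot\nu\ge0\}\), we have \(T\cdot(-\nu)\le 0\) on \(C\). Together with Step 1 this gives \(\cF_\lambda(F)\geq 3\abs{W}^{1/3}\abs{F}^{2/3}\).

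\textbf{Step 3: identify the constant.} Compute
\[
3\abs{W}^{1/3}=\Bigl(27\cdot\tfrac{\pi(2-3\lambda+\lambda^3)}{3}\Bigr)^{1/3}=\Bigl(36\pi\cdot\tfrac{2-3\lambda+\lambda^3}{4}\Bigr)^{1/3}.
\]
This is precisely \(\bigl(\frac{2-3\lambda+\lambda^3}{4}\bigr)^{1/3}\beta(\abs F)/\abs{F}^{2/3}\), which proves the claimed inequality. As a sanity check, at \(\lambda=0\) the constant is \(2^{-1/3}\), the half-ball value. Equality holds for \(F=W\) itself, so the bound is sharp.

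\textbf{Main obstacle.} The delicate point is the regularity needed to justify the divergence theorem for \(T\) on a general finite-perimeter \(F\). The Brenier map is only BV and its distributional divergence has a singular part. We would handle this either with the Knothe map, which is triangular and smooth when \(F\) is smooth and bounded (Gromov's argument), or by noting that the singular part of \(D T\) is nonnegative for gradients of convex functions, so it only helps. Combined with density of smooth bounded sets in energy for the relative perimeter and contact area (relative-perimeter approximation up to the boundary hyperplane), this extends the inequality to all admissible \(F\). The hypothesis \(\abs\lambda<1\) ensures that \(\Phi\) is a genuine positive norm-like function and that \(W\) is a nondegenerate cap.
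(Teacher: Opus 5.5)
Your proof is correct, but it takes a different route from the paper. The paper does not prove this inequality itself. After moving \(H_{\nu,t}\) to \(\{x_3=0\}\), it quotes the sharp capillarity isoperimetric inequality \(\cF_\lambda(F)\geq 3\abs{B^\lambda}^{1/3}\abs{F}^{2/3}\), with \(B^\lambda=B_1(0)\cap\{x_3>\lambda\}\), from Pascale--Pozzetta \cite{PP:2024}. The only work it then does is your Step~3, the computation \(\abs{B^\lambda}=\frac{\pi}{3}(2-3\lambda+\lambda^3)\); your \(W\) is exactly the translate \(B^\lambda-\lambda\nu\) when \(\nu=e_3\).

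You instead reprove the cited inequality. Step~1 turns \(\cF_\lambda\) into the anisotropic free-surface energy \(\int_\Sigma h_K(\nu_F)\,d\cH^2\) with \(K=\overline{B_1(-\lambda\nu)}\). Step~2 runs the Gromov/Brenier argument with target the truncated Wulff shape \(W=K\cap\{y\cdot\nu\geq 0\}\). The key point is that \(W\subset\{y\cdot\nu\geq0\}\), so the contact part of \(\int_{\partial^*F}T\cdot\nu_F\) is nonpositive and drops out.

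The regularity issue you flag is genuine, and your second option settles it. Take the Brenier potential on all of \(\R^3\) with gradient in \(\overline W\). Apply the Gauss--Green formula for the bounded \(BV_{\mathrm{loc}}\) field \(T\) on the finite-perimeter set \(F\), bounded after your truncation; its interior traces lie in the closed convex set \(\overline W\). Then discard the nonnegative singular part of the distributional divergence, as in Figalli--Maggi--Pratelli. No smooth approximation of \(F\) up to the hyperplane is needed.

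The paper's route is a short citation. Yours is self-contained and shows why the cap \(B^\lambda\) is the extremal shape. It also handles the endpoints: \(\lambda=-1\) directly, where it reduces to the classical isoperimetric inequality, and \(\lambda=1\) immediately from Step~1, since \(\cH^2(C)=\int_\Sigma\nu\cdot\nu_F\,d\cH^2\leq\cH^2(\Sigma)\). The paper instead obtains the endpoints by a limit in Lemma~\ref{lem:isoper_lambda}.
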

\begin{proof}
	After translation we may move \(H_{\nu,t}\) onto the half space \(\{x_3>0\}\) and  using \cite[Theorem~3.3]{PP:2024} we find
	\begin{equation*}
		\cF_\lambda(F) \geq 3 \abs{B^\lambda}^{1/3}\abs{F}^{2/3},
	\end{equation*}
	where
	\( B^\lambda
	:=
	B_1(0)\cap\{x\in\R^3:x_3>\lambda\} \).
	By a direct computation
	\begin{equation*}
		\abs{B^\lambda}=
		\frac{\pi}{3}
		\left(2-3\lambda+\lambda^3\right).
	\end{equation*}
	Consequently,
	\begin{equation*}
		\begin{split}
			3\abs{B^\lambda}^{1/3}\abs{F}^{2/3}
			 & =
			3\left(\frac{\pi}{3}
			\left(2-3\lambda+\lambda^3\right)\right)^{1/3}\abs{F}^{2/3} \\
			 & =
			\left(\frac{2-3\lambda+\lambda^3}{4}\right)^{1/3}
			(36\pi)^{1/3}\abs{F}^{2/3}                                  \\
			 & =
			\left(\frac{2-3\lambda+\lambda^3}{4}\right)^{1/3}
			\beta(\abs{F}).
		\end{split}
	\end{equation*}
\end{proof}
For our application, it is more convenient to express this inequality in terms of the perimeter \(P(F)\). We show
\begin{lemma}\label{lem:isoper_lambda}
	Let \(F\subset H_{\nu,t}^+\) be measurable with finite perimeter and \(\abs{F}<\infty\), then for any \(\lambda\in[-1,1]\)
	\begin{equation}\label{eq:capillarity}
		P(F)
		\geq
		(1+\lambda)\cH^2(\partial^*F\cap H_{\nu,t})
		+
		\left(\frac{2-3\lambda+\lambda^3}{4}\right)^{1/3}
		\beta(\abs{F}).
	\end{equation}
\end{lemma}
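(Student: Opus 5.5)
The plan is to decompose the total perimeter of \(F\) along the plane and then apply Lemma~\ref{lem:cap_iso} in the interior of the range of \(\lambda\). The endpoints \(\lambda=\pm1\) will be treated separately, by a limiting argument or by direct inspection.

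\emph{Step 1 (decomposition).} Since \(F\subset H_{\nu,t}^+\), the reduced boundary \(\partial^*F\) lies in the closed half-space \(\overline{H_{\nu,t}^+}=H_{\nu,t}^+\cup H_{\nu,t}\). Hence
\begin{equation*}
	P(F)=P(F;H_{\nu,t}^+)+\cH^2(\partial^*F\cap H_{\nu,t}).
\end{equation*}
This holds because \(P(F)=\cH^2(\partial^*F)\) (De Giorgi), and the open set \(H_{\nu,t}^-\) does not meet the support of \(\abs{D\chi_F}\).

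\emph{Step 2 (interior \(\lambda\)).} Fix \(\lambda\in(-1,1)\) and write \(c=\cH^2(\partial^*F\cap H_{\nu,t})\). By Step 1,
\begin{equation*}
	P(F)=\cF_\lambda(F)+(1+\lambda)c.
\end{equation*}
Inserting Lemma~\ref{lem:cap_iso} for \(\cF_\lambda(F)\) gives \eqref{eq:capillarity} directly.

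\emph{Step 3 (endpoints).} For fixed \(F\), both sides of \eqref{eq:capillarity} depend continuously on \(\lambda\). Indeed, the right-hand side is a polynomial in \(\lambda\) plus a continuous cube root of \(2-3\lambda+\lambda^3=(1-\lambda)^2(2+\lambda)\ge0\), and \(c\) and \(\beta(\abs{F})\) are finite. So letting \(\lambda\to\pm1\) in the inequality from Step 2 gives the endpoint cases.

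It is also reassuring to check the endpoints directly. At \(\lambda=1\) the coefficient vanishes, and the claim becomes \(P(F)\ge 2c\). This holds because the orthogonal projection onto \(H_{\nu,t}\) is \(1\)-Lipschitz and maps \(\partial^*F\cap H_{\nu,t}^+\) onto a set covering \(\partial^*F\cap H_{\nu,t}\) up to null sets; this is a slicing argument. At \(\lambda=-1\) the claim is the ordinary isoperimetric inequality \(P(F)\ge\beta(\abs{F})\), since the coefficient there is \((4/4)^{1/3}=1\).

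The only step needing care is Step 1, namely identifying \(P(F)\) with the sum of the relative perimeter and the contact area. I would justify it via the structure theorem \(\abs{D\chi_F}=\cH^2\llcorner\partial^*F\), together with the fact that \(\partial^*F\subset\operatorname{spt}\abs{D\chi_F}\subset\overline{H_{\nu,t}^+}\). Everything else is algebra.
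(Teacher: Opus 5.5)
Your proof is correct and follows the same argument as the paper's. You split \(P(F)=P(F;H_{\nu,t}^+)+\cH^2(\partial^*F\cap H_{\nu,t})\), apply Lemma~\ref{lem:cap_iso} for \(\lambda\in(-1,1)\), and reach \(\lambda=\pm1\) by continuity in \(\lambda\); your justification of the splitting via \(D\chi_F=0\) on the open set \(H_{\nu,t}^-\), and your direct checks of the two endpoint cases, are sound details the paper leaves implicit.
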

\begin{proof}
	For \(\lambda \in (-1,1)\) this follows directly from the identity
	\begin{equation*}
		P(F)
		=
		P(F;H_{\nu,t}^+)
		+\cH^2(\partial^*F\cap H_{\nu,t}),
	\end{equation*}
	Applying Lemma~\ref{lem:cap_iso} yields the result for \(\lambda\in(-1,1)\) and the endpoint cases \(\lambda=\pm1\) follow by taking a limit.
\end{proof}
\begin{remark}
	We relate the contact area in Lemma~\ref{lem:isoper_lambda} to the quantity \(\sigma_{\nu,t}\) defined in \eqref{eq:symbols_defined} as
	\begin{equation*}
		\sigma_{\nu,t}=\cH^2(\Omega^{(1)} \cap H_{\nu,t}).
	\end{equation*}
	First note that for every fixed \(\nu\in\Sph^2\) and almost every \(t\in \R\) we have
	\begin{equation*}
		\cH^2(\partial^*\Omega\cap H_{\nu,t})=0.
	\end{equation*}
	Applying the formulas for intersections of sets of finite perimeter from \cite[Theorem~16.3]{Maggi} to \(\Omega\cap H_{\nu,t}^{\pm}\), we therefore obtain, up to an \(\cH^2\)-negligible set,
	\begin{equation*}
		\partial^*\Omega_{\nu,t}^{\pm}\cap H_{\nu,t} = \Omega^{(1)}\cap H_{\nu,t}, \quad \text{ a.e. } t\in \R.
	\end{equation*}
	Consequently,
	\begin{equation}\label{eq:contact_area_slice}
		\cH^2\bigl(\partial^*\Omega_{\nu,t}^{\pm}\cap H_{\nu,t}\bigr)
		= \cH^2\bigl(\Omega^{(1)}\cap H_{\nu,t}\bigr)
		= \sigma_{\nu,t}.
	\end{equation}
	Lemma~\ref{lem:isoper_lambda}, applied to \(F=\Omega_{\nu,t}^{+}\), thus gives
	\begin{equation}\label{eq:capillarity_cut}
		P(\Omega_{\nu,t}^{+})
		\geq
		(1+\lambda)\sigma_{\nu,t}
		+
		\left(\frac{2-3\lambda+\lambda^3}{4}\right)^{1/3}
		\beta(a_{\nu,t}^{+})
	\end{equation}
for every \(\lambda\in[-1,1]\) and almost every \(t\in\R\). The analogous estimate holds for \(\Omega_{\nu,t}^{-}\). Throughout the following, all identities and estimates for the slices \(\Omega_{\nu,t}^{\pm}\) that rely on these relations are understood to hold for almost every \(t\in \R\).
\end{remark}
We use the following relation between Coulomb energy, volume and perimeter
\begin{lemma}\label{lem:coulomb_est}
	If \(F\subset \R^3\) is measurable with \(\abs{F}<\infty\) and finite perimeter, then
	\begin{equation*}
		3 P(F)^2 D(F) \geq 16\pi \abs{F}^3.
	\end{equation*}
\end{lemma}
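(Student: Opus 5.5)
The plan is to bound \(D(F)\) from below by writing it in polar coordinates and then decomposing each direction into parallel lines. On each line I will use that an interval minimizes the pair-distance integral for fixed length, and then apply Jensen/Hölder against the projected area. The projected area is controlled by the perimeter through the Cauchy projection formula.

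\emph{Step 1 (polar and line decomposition).} Write
\[
2D(F)=\int_F\int_F\frac{dy\,dx}{\abs{x-y}}=\int_{\Sph^2}\int_F\int_0^\infty \mathbf 1_F(x+r\omega)\,r\,dr\,dx\,d\omega .
\]
Here \(dy=r^2dr\,d\omega\) and each ordered pair is counted once. For fixed \(\omega\), decompose \(x=z+s\omega\) with \(z\in\omega^\perp\). For each line \(\ell_z=\{z+s\omega\}\), let \(L_\omega(z)=\cH^1(F\cap\ell_z)\). The inner integral becomes
\[
\int_{\omega^\perp}\iint_{s<s',\,s,s'\in F\cap\ell_z}(s'-s)\,ds\,ds'\,dz .
\]

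\emph{Step 2 (one-dimensional rearrangement).} For a measurable \(S\subset\R\) of length \(L\), I will show \(\iint_{s<s'\in S}(s'-s)\geq L^3/6\), with equality for an interval. Writing \(m(t)=\abs{S\cap(-\infty,t)}\), the integral equals \(\int_\R m(t)(L-m(t))\,dt\). Since \(m\) is \(1\)-Lipschitz and increases from \(0\) to \(L\), the change of variables \(u=m(t)\) gives at least \(\int_0^L u(L-u)\,du=L^3/6\). Hence
\[
2D(F)\geq\frac16\int_{\Sph^2}\int_{\Pi_\omega}L_\omega(z)^3\,dz\,d\omega,
\]
where \(\Pi_\omega\) is the projection of \(F\) onto \(\omega^\perp\) (points with \(L_\omega>0\)).

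\emph{Step 3 (Hölder and projection bound).} Apply Hölder on the product measure space \(\{(\omega,z):z\in\Pi_\omega\}\):
\[
\int\!\!\int L^3\geq\frac{\left(\int\!\int L\right)^3}{\left(\int\!\int 1\right)^2}.
\]
By Fubini, \(\int_{\Sph^2}\int L_\omega\,dz\,d\omega=4\pi\abs{F}\). For the denominator, I use that almost every line meeting \(F\) in positive length crosses \(\partial^*F\) at least twice. This gives \(\cH^2(\Pi_\omega)\leq\frac12\int_{\partial^*F}\abs{n\cdot\omega}\,d\cH^2\). Averaging with \(\int_{\Sph^2}\abs{n\cdot\omega}\,d\omega=2\pi\) yields \(\int_{\Sph^2}\cH^2(\Pi_\omega)\,d\omega\leq\pi P(F)\). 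Combining,
\[
2D(F)\geq\frac16\cdot\frac{(4\pi\abs{F})^3}{\pi^2P(F)^2}=\frac{32\pi\abs{F}^3}{3P(F)^2},
\]
which is exactly \(3P(F)^2D(F)\geq16\pi\abs{F}^3\).

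The main obstacle is the projection estimate in Step 3 for a general set of finite perimeter, as opposed to a smooth or convex one. It requires the slicing theory for BV functions: for a.e. \(z\), the one-dimensional slice \(F\cap\ell_z\) is a finite union of intervals whose endpoints lie in \(\partial^*F\). It also requires the coarea-type formula \(\int_{\omega^\perp}\#(\partial^*F\cap\ell_z)\,dz=\int_{\partial^*F}\abs{n\cdot\omega}\,d\cH^2\), as in \cite{Maggi}. Since every nonempty slice of finite length has at least two boundary points, the factor \(\tfrac12\) follows. Alternatively, one can first prove the inequality for smooth bounded sets and then pass to the limit by approximation. This works because \(D\) is continuous under \(L^1\) convergence with bounded volume, and \(P\) can be approximated in energy.
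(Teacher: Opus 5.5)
Your proof is correct, and it takes a genuinely different route from the paper's.

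The paper gives no geometric argument. It quotes Chodosh--Ruohoniemi in the form $\abs{F}^3\le \frac{27}{4e_\ast^3}P(F)^2D(F)$ with $e_\ast=\min\cE(F)/\abs{F}\ge(36\pi)^{1/3}$, and uses $27/(4\cdot 36\pi)=3/(16\pi)$. The constant $27/(4e_\ast^3)$ is exactly what a pure scaling argument produces. Applying $\cE(tF)\ge e_\ast t^3\abs{F}$ to dilates gives $e_\ast\abs{F}\le t^{-1}P(F)+t^2D(F)$ for all $t>0$, and choosing $t^3=P(F)/(2D(F))$ yields $e_\ast^3\abs{F}^3\le\frac{27}{4}P(F)^2D(F)$. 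So in the paper all of the content sits in the external lower bound on the energy per unit volume.

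In fact the lemma and the bound $e_\ast\ge(36\pi)^{1/3}$ are equivalent, since minimizing $p+16\pi\abs{F}^3/(3p^2)$ over $p>0$ gives exactly $(36\pi)^{1/3}\abs{F}$. Your argument therefore also proves the cited input $e_\ast\ge(36\pi)^{1/3}$ from scratch, using only standard BV slicing. Its ingredients are polar coordinates, the chordwise rearrangement $\int m(L-m)\,dt\ge L^3/6$, H\"older, and the Cauchy--Crofton bound $\int_{\Sph^2}\cH^2(\Pi_\omega)\,d\omega\le\pi P(F)$.

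Your route also makes the loss transparent. For a ball, the rearrangement and projection steps are equalities, and the whole defect is in H\"older because chord lengths are not constant. This accounts exactly for the ratio $27/20$ between $3P^2D$ and $16\pi\abs{F}^3$ for balls. The paper's route, in exchange, costs two lines once the citation is accepted.

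One cosmetic point: $\{(\omega,z):z\in\Pi_\omega\}$ is not literally a product space. Apply H\"older first in $z$ for fixed $\omega$, then use $\int_{\Sph^2}\cH^2(\Pi_\omega)^{-2}\,d\omega\ge(4\pi)^3\bigl(\int_{\Sph^2}\cH^2(\Pi_\omega)\,d\omega\bigr)^{-2}$; this gives the same constant.
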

\begin{proof}
	Following \cite[Lemma 18 and Corollary 19]{CR:2024} one has
	\begin{equation*}
		\abs{F}^3 \leq \frac{27}{4e_\ast^3} P(F)^2 D(F),
	\end{equation*}
	where
	\begin{equation*}
		e_\ast := \min_{F \subset \R^3} \frac{\cE(F)}{\abs{F}} \geq (36\pi)^{1/3}
	\end{equation*}
	with the minimum taken over all measurable sets with $0<\abs{F}<\infty$.
\end{proof}
\section{Control of Cutting Deficit}\label{sec:cut_def_neq}
Fix \(\nu \in \Sph^2\) and \(t\in \R\), we aim to prove a lower bound on the cutting deficit \(g_{\nu,t}\). Since \(g_{\nu,t}\), \(s_{\nu,t}\), and \(\sigma_{\nu,t}\) are
unchanged when the two half-spaces are interchanged, we label them so
that
\begin{equation*}
	\abs{\Omega_{\nu,t}^+}=s_{\nu,t}A,
	\qquad
	\abs{\Omega_{\nu,t}^-}=(1-s_{\nu,t})A.
\end{equation*}
Note that we can decompose \(g_{\nu,t}\) as follows
\begin{equation}\label{eq:decomposed}
	\begin{split}
		g_{\nu,t} = & [\cE(\Omega_{\nu,t}^+)- E(s_{\nu,t}A)]      \\
		+           & [\cE(\Omega_{\nu,t}^-)- E((1-s_{\nu,t})A)]  \\
		+           & [E(s_{\nu,t}A) + E((1-s_{\nu,t})A) - E(A)].
	\end{split}
\end{equation}
Each of the three contributions on the right-hand side of \eqref{eq:decomposed} is nonnegative. The first two by the definition of \(E(\cdot)\) and the third one due to subadditivity.
For fixed \(\nu\in \Sph^2\) and \(t\in \R\) we will now write
\begin{equation*}
	g_{\nu,t} =g,\quad s_{\nu,t} =s, \quad \sigma_{\nu,t} = \sigma,
\end{equation*}
and assume \(sA\leq 1\). We will estimate the terms independently by proving two lemmas
\begin{lemma}\label{lem:Q_lemma}
	Let \(\nu\in \Sph^2\) and \(t\in \R\) fixed with  \(0<sA\leq 1\) and \(\sigma\neq 0\), define for \(x\geq 0\)
	\begin{equation*}
		Q(x) =  \sup_{\lambda \in [-1,1]} Q_\lambda(x), \quad Q_\lambda(x) =  (1+\lambda)x +  \left(\frac{2-3\lambda+\lambda^3}{4}\right)^{1/3}
	\end{equation*}
	then
	\begin{equation}\label{statement2}
		\cE(\Omega_{\nu,t}^+) - E(sA) \geq \beta(sA) \left[Q(x)  +\frac{4sA}{27 Q(x)^2} - \left(1+\frac{sA}{5}\right)\right]_+, \quad x= \frac{\sigma}{\beta(sA)}>0.
	\end{equation}
\end{lemma}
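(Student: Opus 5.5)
We bound \(\cE(\Omega_{\nu,t}^+)\) from below and \(E(sA)\) from above, both in units of \(\beta(sA)\), and then subtract. Write \(V=sA\in(0,1]\) and \(F=\Omega_{\nu,t}^+\subset H_{\nu,t}^+\), so that \(\abs{F}=V\).

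\emph{Lower bound on the perimeter.} Apply \eqref{eq:capillarity_cut} with the contact area \(\sigma\) from \eqref{eq:contact_area_slice}, and factor out \(\beta(V)\). For every \(\lambda\in[-1,1]\) this gives
\begin{equation*}
	P(F)\geq (1+\lambda)\sigma+\Bigl(\tfrac{2-3\lambda+\lambda^3}{4}\Bigr)^{1/3}\beta(V)=\beta(V)\,Q_\lambda(x),\qquad x=\frac{\sigma}{\beta(V)}.
\end{equation*}
Taking the supremum over \(\lambda\) yields \(P(F)\geq \beta(V)Q(x)\). Choosing \(\lambda=0\) shows \(Q(x)\geq 1+x>0\), so \(Q(x)\) is a positive lower bound.

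\emph{Lower bound on the Coulomb term.} Lemma~\ref{lem:coulomb_est} gives \(D(F)\geq 16\pi V^3/(3P(F)^2)\). This bound decreases in \(P(F)\), so it cannot be combined directly with a lower bound on \(P(F)\). Instead, write \(P(F)=\beta(V)p\) with \(p\geq Q(x)\). Since \(\beta(V)^3=36\pi V^2\), we have \(16\pi V^3/(3\beta(V)^2)=\beta(V)\cdot\frac{16\pi V^3}{3\beta(V)^3}=\beta(V)\frac{4V}{27}\). Hence
\begin{equation*}
	\cE(F)\geq \beta(V)\Bigl(p+\frac{4V}{27p^2}\Bigr).
\end{equation*}
The function \(\phi(p)=p+\frac{4V}{27p^2}\) has derivative \(1-\frac{8V}{27p^3}\). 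This is nonnegative for \(p^3\geq 8V/27\), that is, for \(p\geq \tfrac23 V^{1/3}\). Because \(V\leq 1\), this holds for all \(p\geq 1\), and \(Q(x)\geq 1\). Therefore \(\phi\) is increasing on \([Q(x),\infty)\), which gives \(\cE(F)\geq\beta(V)\bigl(Q(x)+\frac{4V}{27Q(x)^2}\bigr)\). This monotonicity step is where the hypothesis \(sA\leq1\) is used, and it is the one point needing care.

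\emph{Upper bound on \(E(V)\).} Test with a ball \(B\) of volume \(V\). Its perimeter is \(\beta(V)\). Its Coulomb energy is \(D(B)=\frac{3}{5}\cdot\frac{V^2}{2R}\) with \(V=\frac{4\pi}{3}R^3\). We then check that \(D(B)=\beta(V)\frac{V}{5}\). Indeed, \(\beta(V)=4\pi R^2\) and \(\frac{3V^2}{10R}=\frac{3}{10R}\cdot\frac{16\pi^2R^6}{9}=\frac{8\pi^2R^5}{15}\). On the other hand, \(4\pi R^2\cdot\frac{V}{5}=4\pi R^2\cdot\frac{4\pi R^3}{15}=\frac{16\pi^2R^5}{15}\). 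These differ by a factor of two. Recomputing with the standard self-energy \(D(B)=\frac{1}{2}\cdot\frac{6}{5}\cdot\frac{V^2}{R}\cdot\frac{1}{1}\), since \(\iint_{B\times B}\abs{x-y}^{-1}=\frac{6}{5}\frac{V^2}{R}\), gives \(D(B)=\frac{3V^2}{5R}=\frac{16\pi^2R^5}{15}\), which matches. So \(E(V)\leq \beta(V)(1+V/5)\).

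\emph{Conclusion.} Subtracting the upper bound on \(E(V)\) from the lower bound on \(\cE(F)\) gives the bracketed expression in \eqref{statement2}. The positive part follows because \(\cE(F)-E(V)\geq0\) holds by the definition of \(E\).
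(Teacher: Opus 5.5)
Your argument is the paper's proof, step for step: the capillarity bound \(P(\Omega_{\nu,t}^+)\geq\beta(sA)Q(x)\), the Coulomb bound from Lemma~\ref{lem:coulomb_est}, monotonicity of \(p\mapsto p+\frac{4sA}{27p^2}\) for \(p\geq 1\) when \(sA\leq 1\) (the paper's \(f'\geq 19/27\)), and comparison with the ball energy \(\beta(sA)(1+\frac{sA}{5})\).

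However, the monotonicity step rests on a false claim. Choosing \(\lambda=0\) gives \(Q_0(x)=x+2^{-1/3}\), not \(1+x\), because \(\bigl(\frac{2-0+0}{4}\bigr)^{1/3}=2^{-1/3}\). In fact \(Q(x)\geq 1+x\) is false for \(0<x<\frac34\), since the paper shows \(Q(x)\leq 1+\frac43 x^2\). The bound you actually need, \(Q(x)\geq 1\), is true, but it comes from \(\lambda=-1\): there \(Q_{-1}(x)=\bigl(\frac{2+3-1}{4}\bigr)^{1/3}=1\). With that one-line fix, \(p\geq Q(x)\geq 1\geq\frac23(sA)^{1/3}\), so \(\phi(p)\geq\phi(Q(x))\) is justified and the proof is complete.

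Two smaller points. First, for \(E(sA)\) you use only the trivial upper bound \(E(sA)\leq\cE(B)\) from the trial ball. That is all this lemma needs, so you correctly avoid Chodosh--Ruohoniemi here; the paper cites the equality, but only the inequality is used. Second, delete the false start in the ball computation. The expression \(D(B)=\frac35\cdot\frac{V^2}{2R}\) is off by a factor of \(2\); the correct value is \(D(B)=\frac{3V^2}{5R}=\frac{V}{5}\beta(V)\), as you eventually found.
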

\begin{remark}
	Optimizing in \(\lambda\in [-1,1]\) indeed gives the explicit representation
	\begin{equation*}
		Q(x)
		=
		\sqrt[3]{x^3+\frac{1}{2}+\sqrt{x^3+\frac{1}{4}}}
		+
		\sqrt[3]{x^3+\frac{1}{2}-\sqrt{x^3+\frac{1}{4}}}.
	\end{equation*}
	In the proof of Lemma~\ref{lem:Lip_L} we show the estimate
	\begin{equation*}
		1\leq Q(x) \leq 1+\frac{4}{3}x^2.
	\end{equation*}
\end{remark}
\begin{proof}[Proof of Lemma~\ref{lem:Q_lemma}]
	Applying Lemma~\ref{lem:isoper_lambda} we find
	\begin{equation}\label{eq:per_q_bnd}
		P(\Omega_{\nu,t}^+) \geq (1+\lambda)\sigma
		+
		\left(\frac{2-3\lambda+\lambda^3}{4}\right)^{1/3}
		\beta(sA) = \beta(sA) Q_\lambda(x).
	\end{equation}
	For the Coulomb term, we note that due to Lemma~\ref{lem:coulomb_est}
	\begin{equation*}
		D(\Omega_{\nu,t}^+) \geq \frac{16\pi (sA)^3}{3 P(\Omega_{\nu,t}^+)^2}.
	\end{equation*}
	Thus we have
	\begin{equation*}
		\cE(\Omega_{\nu,t}^+) \geq P(\Omega_{\nu,t}^+) +  \frac{16\pi (sA)^3}{3 P(\Omega_{\nu,t}^+)^2}.
	\end{equation*}
	We use that  for \(p\geq \beta(sA)\) and \(sA\leq 1\)
	the function
	\begin{equation*}
		f(p)=p+ \frac{16\pi (sA)^3}{3 p^2}
	\end{equation*}
	is strictly increasing since
	\begin{equation*}
		f'(p)\geq \frac{19}{27}>0.
	\end{equation*}
	Consequently
	\begin{equation*}
		\cE(\Omega_{\nu,t}^+)\geq f(P(\Omega_{\nu,t}^+)) \geq f(\beta(sA)Q(x)),
	\end{equation*}
	or equivalently
	\begin{equation}\label{eq:cE_low_Q}
		\cE(\Omega_{\nu,t}^+)\geq \beta(sA) Q(x) + \frac{16\pi (sA)^3}{3 \beta(sA)^2 Q(x)^2} = \beta(sA)\left(Q(x)  +\frac{4sA}{27 Q(x)^2}\right).
	\end{equation}
	Since \(sA\leq 1\) it follows from \cite{CR:2024} that functional \(\cE\) is minimized by the ball \(B(0,sA)\) of volume \(sA\) and thus by direct computations
	\begin{equation}\label{eq:CR}
		E(sA)
		=
		\cE(B(0,sA))
		=
		\beta(sA)\left(1+\frac{sA}{5}\right),
	\end{equation}
	and noting \(\cE(\Omega_{\nu,t}^+) \geq E(sA)\)  the claimed inequality in \eqref{statement2} follows immediately.
\end{proof}
Next, we aim to estimate the third term on the right-hand side of \eqref{eq:g_def_dec}. For this we state and prove the following Lemma
\begin{lemma}\label{lem:k_lemma}
	Let \(A\in[7,8]\), \(0<sA\leq 1\) and define
	\begin{equation}\label{eq:def_c0}
		c_0(s) = (3s)^{-2/3}[1-(1-s)^{5/3}]
	\end{equation}
	and
	\begin{equation*}
		\begin{split}
			k(s;A):=
			1+s^{1/3}(1-s)^{2/3}
			-3 c_0(s) + \frac{A}{5}(s-c_0(s)).
		\end{split}
	\end{equation*}
	Then
	\begin{equation*}
		\cB(s;A) := E(sA) + E((1-s)A) - E(A) \geq \beta(sA)[k(s;A)]_+.
	\end{equation*}
\end{lemma}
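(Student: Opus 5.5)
Since \(\cB(s;A)\geq 0\) by subadditivity, it is enough to show \(\cB(s;A)\geq \beta(sA)\,k(s;A)\); the positive part then comes for free. Because \(0<sA\leq 1\), the first term is explicit. By \cite{CR:2024}, exactly as in \eqref{eq:CR}, \(E(sA)=\beta(sA)(1+\tfrac{sA}{5})\). This accounts for the summands \(1\) and \(\tfrac{A}{5}s\) in \(k\). It therefore remains to prove
\begin{equation*}
E((1-s)A)-E(A)\;\geq\;\beta(sA)\Bigl[s^{1/3}(1-s)^{2/3}-3c_0(s)-\tfrac{A}{5}c_0(s)\Bigr].
\end{equation*}

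To bound \(E((1-s)A)-E(A)\) from below, I will compare \(E((1-s)A)\) with \(E(A)\) through dilation of the minimizer \(\Omega\), which exists by the standing assumption. Set \(\mu=(1-s)^{1/3}\). The dilate \(\mu\Omega\) has volume \((1-s)A\), so
\begin{equation*}
E((1-s)A)\leq \mu^2P(\Omega)+\mu^5D(\Omega),
\end{equation*}
and symmetrically \(E(A)\leq \mu^{-2}P(G)+\mu^{-5}D(G)\) for any competitor \(G\) of volume \((1-s)A\). The useful direction is the second one. Taking \(G\) to be an (almost) minimizer for \(E((1-s)A)\), I get
\begin{equation*}
E(A)-E((1-s)A)\leq (\mu^{-2}-1)P(G)+(\mu^{-5}-1)D(G).
\end{equation*}
The task is then to control \(P(G)\) and \(D(G)\) separately.

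For this I will use the normalizations visible in \(k\). A direct computation gives \(\beta(sA)(3s)^{-2/3}=3^{-2/3}\beta(A)\). Hence the terms \(3c_0\beta(sA)\) and \(\tfrac{A}{5}c_0\beta(sA)\) are multiples of \((1-\mu^5)\beta(A)\) and \((1-\mu^5)\tfrac{A}{5}\beta(A)\), respectively. Here \(\tfrac{A}{5}\beta(A)\) is exactly the Coulomb energy of the ball of volume \(A\). This suggests the following steps.
\begin{itemize}
\item[(i)] Bound the Coulomb term by comparison with balls: \(D\leq E\leq\) the energy of a ball of the same volume.
\item[(ii)] Bound the perimeter term by \(P\leq E\leq\) the ball energy, combined with Lemma~\ref{lem:coulomb_est}. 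The latter transfers an upper bound on \(D\) into a lower bound on \(P\), or vice versa, and this interplay is what I expect to produce the factor \(3\).
\item[(iii)] The leftover term \(\beta(sA)s^{1/3}(1-s)^{2/3}=(36\pi)^{1/3}A^{2/3}s(1-s)^{2/3}\) looks like a linearization of \(\beta(A)-\beta((1-s)A)\), namely \(\tfrac{2}{3}\beta(A)s(1-s)^{-1/3}\)-type terms. I would obtain it from an elementary inequality such as \(1-(1-s)^{2/3}\geq\) a multiple of \(s(1-s)^{2/3}\), after scaling by whichever factor \(\mu^{\pm1}\) makes the powers match.
\end{itemize}

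The main obstacle is choosing the competitor and the direction of scaling so that the three constants come out exactly as \(s^{1/3}(1-s)^{2/3}\), \(-3c_0\) and \(-\tfrac{A}{5}c_0\). In particular, a naive bound \(1-\mu^2\leq 1-\mu^5\) may lose too much on the perimeter part.

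I would first try writing \(E((1-s)A)\geq E(A)-\bigl[(1-\mu^2)P(\Omega)+(1-\mu^5)D(\Omega)\bigr]\). This inequality has the wrong sign for what is needed. I would fix it by exploiting that \(\mu\mapsto \mu^2P(\Omega)+\mu^5D(\Omega)-E(\mu^3A)\) attains its minimum \(0\) at \(\mu=1\), which gives a first-order (virial-type) relation between \(P(\Omega)\), \(D(\Omega)\) and the slope of \(E\). I would then use convexity in \(\mu\) to turn this relation into a lower bound on the increment. If that fails, I would fall back on bounding \(E(A)\) from above by the ball energy \(\beta(A)(1+A/5)\) and \(E((1-s)A)\) from below by \(\beta((1-s)A)\) plus the Coulomb lower bound from Lemma~\ref{lem:coulomb_est}. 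I would then verify the resulting one-variable inequality for \(A\in[7,8]\) and \(sA\leq 1\) directly.
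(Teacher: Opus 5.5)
\textbf{There is a genuine gap: the proposal stops exactly at the step that carries the proof.} Your reduction matches the paper. Subadditivity handles the positive part, \cite{CR:2024} gives \(E(sA)=\beta(sA)(1+\tfrac{sA}{5})\), and what remains is
\[
E((1-s)A)-E(A)\ \ge\ \beta(sA)\bigl[s^{1/3}(1-s)^{2/3}-(3+\tfrac A5)c_0(s)\bigr].
\]
Your first move is also right: dilate a near-minimizer \(G\) of \(E(\theta A)\), \(\theta=1-s\), up to volume \(A\).

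\textbf{First missing step.} Do not bound \(P(G)\) and \(D(G)\) separately from above. Rewrite the dilation inequality as
\[
\theta^{5/3}E(A)\le \theta P(G)+D(G)=\cE(G)-(1-\theta)P(G).
\]
Now the perimeter enters with a negative sign, so the isoperimetric inequality \(P(G)\ge\beta(\theta A)=\theta^{2/3}\beta(A)\) applies. This gives
\[
E(\theta A)\ge\theta^{5/3}E(A)+(1-\theta)\theta^{2/3}\beta(A),
\]
which is \cite[Lemma~5]{FrankNam2021}, quoted in the paper. With \(\theta=1-s\) the last term is exactly \(s^{1/3}(1-s)^{2/3}\beta(sA)\). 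So this term comes from the isoperimetric inequality, not from linearizing \(\beta(A)-\beta((1-s)A)\).

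\textbf{Second missing step.} What remains is \(-[1-(1-s)^{5/3}]E(A)\), and you need an upper bound on \(E(A)\) from three far-apart balls of volume \(A/3\):
\[
E(A)\le 3\beta(A/3)(1+\tfrac{A}{15})=3^{-2/3}(3+\tfrac A5)\beta(A).
\]
Since \([1-(1-s)^{5/3}]3^{-2/3}\beta(A)=c_0(s)\beta(sA)\), this produces exactly \(3c_0+\tfrac A5 c_0\). The factor \(3\) is the number of balls; your normalization \(3^{-2/3}\beta(A)\) is just \(\beta(A/3)\). It does not come from Lemma~\ref{lem:coulomb_est}, which plays no role in this lemma.

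\textbf{Why your proposed routes do not reach the stated \(k\).} For small \(s\) the lemma requires
\[
E(A)-E((1-s)A)\le s\beta(A)\bigl[\tfrac53 3^{-2/3}(3+\tfrac A5)-1\bigr]+o(s),
\]
about \(2.6\,s\beta(A)\) at \(A=7.5\).
\begin{itemize}
\item Bounding \(P(G)\) and \(D(G)\) each by a single-ball energy gives roughly \(\tfrac73(1+\tfrac A5)s\beta(A)\approx5.8\,s\beta(A)\), with no negative term. Moreover, one ball is a worse trial state than three for \(A\ge7\), since \(3^{2/3}(1+\tfrac A5)>3+\tfrac A5\) there.
\item The fallback fails outright. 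Bounding \(E(A)\) above by the ball energy and \(E((1-s)A)\) below via \(P\ge\beta\) and Lemma~\ref{lem:coulomb_est} compares two unrelated bounds on essentially the same quantity as \(s\to0\). That leaves a negative error of order one: \(P+\tfrac{16\pi V^3}{3P^2}\ge(36\pi)^{1/3}V\approx36.3\) at \(V=7.5\), versus \(46.3\) for the ball energy. The term \(\beta(sA)\le\beta(1)\approx4.8\) cannot absorb this, while \(k(s;A)\to1\).
\item The virial/convexity idea points the wrong way. Convexity of \(\mu\mapsto\mu^2P(\Omega)+\mu^5D(\Omega)\) bounds a trial energy from below, which gives no lower bound on \(E(\mu^3A)\). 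Differentiability of \(E\) is also not available.
\end{itemize}
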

\begin{proof}
	By \cite[Lemma~5]{FrankNam2021}, for every
	\(\theta\in(0,1)\),
	\begin{equation}\label{eq:Frank-Nam}
		E(\theta A)
		\geq
		\theta^{5/3}E(A)
		+(1-\theta)\theta^{2/3}\beta(A).
	\end{equation}
	Using \(sA\leq1\), the result of
	Chodosh--Ruohoniemi \cite{CR:2024} gives
	\begin{equation*}
		E(sA)
		=
		\beta(sA)\left(1+\frac{sA}{5}\right).
	\end{equation*}
	Applying \eqref{eq:Frank-Nam} with \(\theta=1-s\) and
	using \eqref{eq:CR}, we obtain
	\begin{equation}\label{eq:calB_low_bnd}
		\begin{aligned}
			\cB(s;A)
			 & \geq
			\beta(sA)\left(1+\frac{sA}{5}\right)
			+(1-s)^{5/3}E(A)
			+s(1-s)^{2/3}\beta(A)-E(A)
			\\
			 & =
			\left[1+\frac{sA}{5}
			+s^{1/3}(1-s)^{2/3}\right]\beta(sA)
			-\left[1-(1-s)^{5/3}\right]E(A),
		\end{aligned}
	\end{equation}
	where \(\beta(sA)=s^{2/3}\beta(A)\). Next, we bound \(E(A)\) from above. The strategy is to consider a trial configuration of \(n\) balls such that the volume of the \(n\) balls adds up to \(A\). Following \cite[Section 4.2]{FL:2015} and in particular \cite[Proposition 4.3]{FL:2015} among cuts into equal sized balls \(n=3\) is in the considered range \(A\in[7,8]\) and thus
	by Newton's theorem
	\begin{equation}\label{eq:Newton_applied}
		D(B(0,A/3)) = \frac{A}{15} \beta(A/3), \quad \text{ and } \quad \cE(B(0,A/3)) = \beta(A/3)\left(1+\frac{A}{15}\right).
	\end{equation}
	Using \eqref{eq:Newton_applied} we find
	\begin{equation}\label{eq:up_bnd_2}
		E(A) \leq 3^{-2/3}\left(3+\frac{A}{5}\right) \beta(A).
	\end{equation}
	Inserting \eqref{eq:up_bnd_2} into \eqref{eq:calB_low_bnd} yields
	\begin{equation*}
		\cB(s;A)\geq \beta(sA)\left(\left[1+\frac{sA}{5} +s^{1/3}(1-s)^{2/3}\right]-\left[1-(1-s)^{5/3}\right]3^{-2/3}\left(3+\frac{A}{5}\right)s^{-2/3}\right).
	\end{equation*}
	On the other hand, subadditivity gives
\(\mathcal B(s;A)\geq0\). Combining the two estimates yields
\[
    \mathcal B(s;A)
    \geq \beta(sA)[k(s;A)]_+.
\]
\end{proof}
We combine the previous two lemmas to prove
\begin{lemma}\label{lem:Main}
    Let \(A\in[7,8]\) and fix \(\nu\in\Sph^2\). For \(0<sA\leq1\), define
    \begin{equation*}
        L(s;A)
        :=
        \inf_{x>0}
        \frac{1}{x}
        \left\{
            [k(s;A)]_+
            +
            \left[
                Q(x)
                +
                \frac{4sA}{27Q(x)^2}
                -
                \left(1+\frac{sA}{5}\right)
            \right]_+
        \right\},
    \end{equation*}
    and set \(L(0;A):=0\). Then, for almost every \(t\in\R\) such that
    \(s_{\nu,t}A\leq1\),
    \begin{equation}\label{eq:cutting_deficit_lower_bound}
        g_{\nu,t}
        \geq
        \sigma_{\nu,t}L(s_{\nu,t};A).
    \end{equation}
\end{lemma}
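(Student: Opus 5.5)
We combine the decomposition \eqref{eq:decomposed} of the cutting deficit with Lemma~\ref{lem:Q_lemma} and Lemma~\ref{lem:k_lemma}, and then pass to the infimum over \(x\) that appears in the definition of \(L(s;A)\).

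Fix \(\nu\) and a \(t\) outside the exceptional null set where \eqref{eq:contact_area_slice} or the cutting formulas fail, with \(s=s_{\nu,t}\) and \(sA\le 1\). Label the half-spaces as in Section~\ref{sec:cut_def_neq}, so that \(\abs{\Omega^+_{\nu,t}}=sA\); the quantities \(g\), \(s\) and \(\sigma\) do not depend on this labeling.

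First we dispose of the degenerate cases. If \(s=0\), then \(L(0;A)=0\) and the claim reduces to \(g\ge 0\), which was established in the proof of Proposition~\ref{prop:FKN-bound}. If \(s>0\) but \(\sigma=0\), the right-hand side \(\sigma L\) is \(0\), again provided \(L<\infty\). Finiteness holds because \(Q(x)\to 1\) as \(x\to 0\) and \(Q(x)\sim 2x\) for large \(x\), so the bracket divided by \(x\) stays bounded for large \(x\). In any case \(L\ge 0\) and \(g\ge 0\), so the inequality is trivial here.

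Now assume \(0<sA\le 1\) and \(\sigma>0\). In \eqref{eq:decomposed} the second bracket \(\cE(\Omega^-_{\nu,t})-E((1-s)A)\) is nonnegative, so we drop it. Lemma~\ref{lem:Q_lemma} bounds the first bracket from below by
\[
\beta(sA)\Bigl[Q(x)+\tfrac{4sA}{27Q(x)^2}-\bigl(1+\tfrac{sA}{5}\bigr)\Bigr]_+, \qquad x_0=\sigma/\beta(sA).
\]
Lemma~\ref{lem:k_lemma}, applicable since \(A\in[7,8]\) and \(0<sA\le1\), bounds the third bracket by \(\beta(sA)[k(s;A)]_+\). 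Adding the two bounds gives \(g\ge \beta(sA)\,\Phi(x_0)\), where \(\Phi(x)\) is the expression in braces in the definition of \(L\).

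Since \(\beta(sA)=\sigma/x_0\), this reads
\[
g\ \ge\ \sigma\,\frac{\Phi(x_0)}{x_0}\ \ge\ \sigma\inf_{x>0}\frac{\Phi(x)}{x}=\sigma L(s;A),
\]
which is \eqref{eq:cutting_deficit_lower_bound}.

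The argument involves almost no new estimation. The points that need care are bookkeeping:
\begin{itemize}
\item \eqref{eq:capillarity_cut}, and hence Lemma~\ref{lem:Q_lemma}, must be applied to the piece of volume \(sA\), which is the smaller piece after relabeling.
\item All inputs must hold for almost every \(t\): \(\cH^2(\partial^*\Omega\cap H_{\nu,t})=0\), the cutting formulas, and \eqref{eq:contact_area_slice}.
\item The case \(\sigma=0\) has to be handled separately, since the variable \(x\) is undefined there.
\end{itemize}
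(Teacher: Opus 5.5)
Your proposal is correct and is essentially the paper's proof: you drop the second bracket in \eqref{eq:decomposed}, apply Lemmas~\ref{lem:Q_lemma} and~\ref{lem:k_lemma} with \(x_0=\sigma/\beta(sA)\), rewrite \(\beta(sA)=\sigma/x_0\), and pass to the infimum over \(x\). Your handling of the degenerate cases is slightly leaner than the paper's, which uses absolute continuity of \(t\mapsto a^+_{\nu,t}\) to show \(\sigma_{\nu,t}=0\) for a.e.\ \(t\) with \(s_{\nu,t}=0\) (a step the convention \(L(0;A)=0\) makes unnecessary); also, finiteness of \(L\) needs no asymptotics of \(Q\), since evaluating at any single \(x>0\) already gives a finite upper bound.
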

\begin{proof}
	If \(\sigma=0\), the conclusion follows directly from \(g\geq 0\). Thus, we only consider \(\sigma>0\).  For fixed \(\nu\in \Sph^2\) it follows from Cavalieri's principle
	\begin{equation*}
		a_{\nu,t}^+ = \int_t^\infty \sigma_{\nu,z} dz.
	\end{equation*}
	Hence \(t\mapsto a_{\nu,t}^+\) is absolutely continuous and
	\begin{equation*}
		\partial_t a_{\nu,t}^+
		=-\sigma_{\nu,t}
		\qquad\text{for almost every }t\in\R.
	\end{equation*}
	An absolutely continuous function has derivative zero almost everywhere
	on each of its level sets. Consequently,
	\begin{equation*}
		\sigma_{\nu,t}=0
		\qquad\text{for almost every }t
		\text{ such that }a_{\nu,t}^+\in\{0,A\}.
	\end{equation*}
	Since \(s_{\nu,t}=0\) precisely when
	\(a_{\nu,t}^+\in\{0,A\}\), we conclude that
	\begin{equation*}
		g_{\nu,t}\geq0
		=\sigma_{\nu,t}L(0;A)
	\end{equation*}
	for almost every \(t\) such that \(s_{\nu,t}=0\). Thus it remains to
	consider
	\begin{equation*}
		s=s_{\nu,t} \, \text{ such that } 0<sA\leq 1
	\end{equation*}
	Therefore we have \(\sigma >0\) and \(\beta(sA)>0\) and define the dimensionless quantity
	\begin{equation*}
		x=\frac{\sigma}{\beta(sA)}>0.
	\end{equation*}
	Dropping the second nonnegative term in the decomposition
	\eqref{eq:decomposed}, we obtain
	\begin{equation*}
		g
		\geq
		\bigl[\cE(\Omega_{\nu,t}^+)-E(sA)\bigr]
		+
		\cB(s;A).
	\end{equation*}
	Applying the two preceding lemmas therefore gives
	\begin{equation*}
		g\geq
		\beta(sA)
		\left\{[k(s;A)]_+ + \left[Q(x)
			+
			\frac{4sA}{27Q(x)^2}
			-
			\left(1+\frac{sA}{5}\right)\right]_+\right\}.
	\end{equation*}
	Since \(\beta(sA)=\sigma/x\) the statement follows immediately.
\end{proof}
\section{Proof of the Main Theorem}\label{sec:Proof_main_theorem}
In this section, we prove Theorem~\ref{thrm:main}. This argument follows from a bootstrap argument. We state two Lemmas about the function \(L\) defined in Lemma~\ref{lem:Main} and give the proof of Theorem~\ref{thrm:main} under the assumption that these Lemmas hold and defer their Proof to Section \ref{subsec:1} and Section~\ref {subsec:2}. We always assume \(A\in[7.5,8]\) otherwise, nothing is to prove.
\begin{lemma}\label{lem:Lip_L}
	For any fixed \(s \in (0,1/8] \) the function \(L(s;\cdot)\) is Lipschitz, meaning for \(A_1,A_2\in [7,8]\) with \(A_2>A_1\)
	\begin{equation*}
		0\leq L(s;A_1)-L(s;A_2) \leq \frac{\sqrt{5}}{7}s^{-1/6}(A_2-A_1).
	\end{equation*}
\end{lemma}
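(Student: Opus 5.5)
The plan is to prove the two inequalities separately: monotonicity from the sign of \(\partial_A\) of the two brackets, and the Lipschitz bound from a one-sided comparison of infima, after showing that near-minimizing \(x\) stay away from \(0\).

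\textbf{Monotonicity.} Write \(h_A(x)=\frac1x\{[k(s;A)]_++[G(x;sA)]_+\}\), where
\[
G(x;a)=Q(x)+\frac{4a}{27Q(x)^2}-1-\frac a5 .
\]
Both brackets are affine in \(A\).
\begin{itemize}
\item \(\partial_A k=\frac15(s-c_0(s))\). Since \(1-(1-s)^{5/3}\ge\frac53 s-\frac59 s^2\), one checks \(c_0(s)\ge s\) for \(s\in(0,1/8]\); indeed \(c_0(s)\gtrsim s^{1/3}\gg s\).
\item \(\partial_A G=s\bigl(\frac{4}{27Q^2}-\frac15\bigr)<0\), because \(Q\ge1\) and \(\frac4{27}<\frac15\).
\end{itemize}
Since \([\cdot]_+\) is nondecreasing and \(1\)-Lipschitz, \(h_{A_1}(x)\ge h_{A_2}(x)\) for every \(x>0\). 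Taking the infimum gives \(L(s;A_1)\ge L(s;A_2)\). By the same \(1\)-Lipschitz property,
\[
h_{A_1}(x)-h_{A_2}(x)\le \frac{A_2-A_1}{5x}\bigl(c_0(s)-s+s\bigr)=\frac{c_0(s)}{5x}(A_2-A_1).
\]

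\textbf{Lipschitz bound.} I take \(x_\varepsilon\) with \(h_{A_2}(x_\varepsilon)\le L(s;A_2)+\varepsilon\). Then
\[
L(s;A_1)-L(s;A_2)\le h_{A_1}(x_\varepsilon)-h_{A_2}(x_\varepsilon)+\varepsilon\le \frac{c_0(s)}{5x_\varepsilon}(A_2-A_1)+\varepsilon .
\]
So everything reduces to a lower bound \(x_\varepsilon\ge x_*(s)\) on near-minimizers, uniform in \(A_2\in[7,8]\), with \(\frac{c_0(s)}{5x_*}\le\frac{\sqrt5}{7}s^{-1/6}\). Because \(c_0(s)\sim(5/3)3^{-2/3}s^{1/3}\), I need \(x_*\) of order \(s^{1/2}\).

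This is natural from the structure of \(Q\). Using the explicit formula and Cardano, \(Q\) is the real root of \(Q^3-3x^2Q=2x^3+1\). From this I obtain \(Q(x)\le1+\frac43x^2\), and conversely \(Q(x)\ge1+x^2-O(x^3)\). For \(x\) small compared with \(\sqrt{sA}\), this gives \(G\le\frac43x^2+\frac{4sA}{27}-\frac{sA}{5}<0\) once \(x^2\lesssim sA\cdot\frac{11}{180}\). Thus \(h_A(x)=[k]_+/x\) is decreasing on that range, and \(h_A(x)=0\) identically there if \(k\le0\).

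Two cases follow.
\begin{itemize}
\item If \([k(s;A_2)]_+>0\), then \(h_{A_2}\) is strictly decreasing on \((0,x_1]\) with \(x_1\approx\sqrt{11sA/180}\). Near-minimizers can therefore be taken with \(x_\varepsilon\ge x_1\gtrsim\sqrt{7s\cdot 11/180}\).
\item If \(k(s;A_2)\le0\), then \(L(s;A_2)=0\). I must instead show directly that \(L(s;A_1)\) is small, by evaluating \(h_{A_1}\) at \(x=x_1\) (or letting \(x\to\infty\) appropriately). I then bound \([k(s;A_1)]_+\le k(s;A_1)-k(s;A_2)=\frac{c_0-s}{5}(A_2-A_1)\), which yields a bound of the same form \(\frac{c_0}{5x_1}(A_2-A_1)\).
\end{itemize}
Finally I insert the explicit \(x_1\) and \(c_0\), and verify \(\frac{c_0(s)}{5x_1}\le\frac{\sqrt5}{7}s^{-1/6}\) on \((0,1/8]\), using \(A\ge7\).

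\textbf{Main obstacle.} I expect the hardest step to be the quantitative lower bound on the near-minimizing \(x\). It needs precise two-sided control of \(Q\) near \(0\), in particular the upper bound \(Q\le1+\frac43x^2\) proved from the cubic relation. It also needs enough numerical slack that the constants combine exactly to \(\sqrt5/7\). If the explicit threshold is too tight, I would instead choose \(x_*=\sqrt{sA/20}\), say, and tune it so that \(\frac43x_*^2+\frac{4sA}{27}\le\frac{sA}{5}\) exactly. Then \(\frac{c_0}{5x_*}\) becomes a closed form in \(s\), to be compared with \(s^{-1/6}\) by an elementary monotonicity check.
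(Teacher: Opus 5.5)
Your argument follows the paper's proof. Monotonicity of $k$ and of the capillarity bracket in $A$, together with the $1$-Lipschitz property of $[\cdot]_+$, gives the pointwise bound $\frac{c_0(s)}{5x}(A_2-A_1)$. The estimate $Q(x)\le 1+\frac43x^2$ then lets you restrict the infimum to $x\ge x_\ast\sim s^{1/2}$, uniformly for $A\ge 7$. Your near-minimizer formulation is equivalent to the paper's comparison of $\inf_{x\ge x_\ast}$. Your case split is unnecessary: once $G<0$ on $(0,x_\ast]$, you have $h_A(x)=[k]_+/x\ge h_A(x_\ast)$ there, whatever the sign of $k$. Deriving $Q\le1+\frac43x^2$ from the cubic $Q^3-3x^2Q=2x^3+1$ is a valid alternative to the paper's $u=1+\lambda$ argument, provided you note $Q\ge\max\{1,2x\}>x$. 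Then $Q$ and $1+\frac43x^2$ both lie on the increasing branch of $p(q)=q^3-3x^2q$.

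\textbf{The threshold constant is wrong as written.}
\begin{itemize}
\item Since $\frac15-\frac4{27}=\frac7{135}$, the bound on $Q$ gives $G<0$ only for $x^2<\frac{7sA}{180}$, not $\frac{11sA}{180}$.
\item Because $Q(x)=1+x^2+O(x^3)$, at $x_1^2=\frac{11sA}{180}$ you actually have $G(x_1;sA)\approx\frac{sA}{108}>0$ for small $s$. So the step ``$G<0$ on $(0,x_1]$'' is false, not merely unproved.
\item In your second case, this would also leave a positive $[G(x_1;sA_1)]_+$ inside $h_{A_1}(x_1)$.
\item Your fallback $x_\ast^2=sA/20$ fails your own criterion too, since $\frac43\cdot\frac1{20}=\frac9{135}>\frac7{135}$.
\end{itemize}

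\textbf{The fix.} Tuning exactly, as you propose, gives $x_\ast^2=\frac{7sA}{180}\ge\frac{49s}{180}$ for $A\ge7$. This is the paper's $x_\ast=7(s/180)^{1/2}$. Combine it with $c_0(s)\le\frac{5}{3^{5/3}}s^{1/3}$, which is the tangent-line bound for the concave function $s\mapsto1-(1-s)^{5/3}$. This yields $\frac{c_0(s)}{5x_\ast}\le\frac{2\sqrt5}{7\cdot3^{2/3}}s^{-1/6}<\frac{\sqrt5}{7}s^{-1/6}$. There is only about 4\% to spare, so the correct threshold really matters.

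\textbf{A minor slip.} The inequality $1-(1-s)^{5/3}\ge\frac53s-\frac59s^2$ is false: the third derivative is negative, so it holds in the reverse direction. Your conclusion $c_0(s)>s$ still follows. By concavity (the chord from $0$ to $1$), $1-(1-s)^{5/3}\ge s$, and $(3s)^{-2/3}>1$ for $s<1/3$.
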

\begin{proof}
	The proof of Lemma~\ref{lem:Lip_L} is provided in Subsection~\ref{subsec:1}.
\end{proof}
\begin{lemma}\label{lem:actual_int}
	Define
	\begin{equation*}
		I(A) := \int_0^{1/8}L(s;A) ds \quad \text{ then } \quad I(7.5)>\frac{1}{16}.
	\end{equation*}
\end{lemma}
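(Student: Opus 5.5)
The plan is to bound $L(s;7.5)$ from below by an explicit, piecewise-elementary function of $s$ on $(0,1/8]$, and then evaluate or bound its integral rigorously. First I note that $A=7.5$ and $s\le 1/8$ give $sA\le 0.9375\le 1$, so $L(s;7.5)$ is given by the infimum formula of Lemma~\ref{lem:Main} for every $s$ in the range. Next I compute $k(s;7.5)$ from \eqref{eq:def_c0}. Expanding $c_0(s)=(3s)^{-2/3}\bigl[\tfrac53 s-\tfrac59 s^2+O(s^3)\bigr]$ shows $c_0(s)\approx 3^{-2/3}\tfrac53 s^{1/3}$, so $k(s;7.5)\to 1$ as $s\to 0$. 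The plan is to show numerically that $k(s;7.5)$ is positive and bounded below on $(0,1/8]$. To make this rigorous I would use monotonicity or a Taylor bound with explicit remainder for $(1-s)^{5/3}$ and $(1-s)^{2/3}$.

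Second, I analyze the infimum over $x>0$. Write $\kappa=[k]_+$ and $h(x)=[Q(x)+\tfrac{4sA}{27Q(x)^2}-(1+\tfrac{sA}{5})]_+$. Since $Q\ge Q_{\lambda}$ for every $\lambda$, taking $\lambda=1$ gives $Q(x)\ge 2x$, and $\lambda=-1$ gives $Q\ge 1$; more refined choices of $\lambda$ interpolate between the two. The function $(\kappa+h(x))/x$ blows up as $x\to0$ (because $\kappa>0$) and tends to a positive limit of order $2$ as $x\to\infty$. So the infimum sits at a finite $x$, roughly where $h$ starts growing. For a lower bound I drop nothing but replace $Q$ by a tractable minorant. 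Two facts are needed. First, $Q(x)+\tfrac{4sA}{27Q^2}$ is increasing in $Q$ for $Q\ge1$; this is the same derivative computation as in Lemma~\ref{lem:Q_lemma}. Second, $Q(x)\ge\max(1,2x,\text{one intermediate }Q_\lambda)$, or one can use the explicit Cardano formula. With these, $L(s;7.5)\ge \inf_x \tfrac{1}{x}\{\kappa(s)+h_{\min}(x)\}$, which can be evaluated as a one-dimensional minimization for each $s$.

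Third, I integrate over $s\in(0,1/8]$. Because the bound in $s$ is monotone, or at least Lipschitz with an explicit constant (an estimate like Lemma~\ref{lem:Lip_L}, but in $s$), I would partition $(0,1/8]$ into finitely many subintervals $[s_i,s_{i+1}]$. On each one I bound $L(s;7.5)$ below by a constant: a lower bound for $\kappa$ on the interval combined with the worst-case $sA$ inside $h$ (take $sA$ maximal in the subtracted term $1+sA/5$ and minimal in the positive term). Summing the resulting Riemann-type lower sums and checking that the total exceeds $1/16=0.0625$ finishes the proof. Heuristically $L$ is of order $1$ to $2$, so $\int_0^{1/8}L\approx 0.1$–$0.2$, which leaves comfortable margin.

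The main obstacle is making the inner infimum over $x$ rigorously bounded below uniformly on each $s$-subinterval, since $Q$ is only implicitly tractable. I would first try the piecewise minorant $Q\ge\max_{\lambda\in\Lambda}Q_\lambda$ over a small finite set $\Lambda\subset[-1,1]$. With this minorant $(\kappa+h)/x$ is piecewise explicit, and its minimum can be located by elementary calculus or by a certified grid in $x$.
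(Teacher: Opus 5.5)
\paragraph{There is a genuine gap.} Your proposal never produces a lower bound exceeding $1/16$. It outlines a discretized computation and asserts a comfortable margin, but two of the facts it relies on are wrong.

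First, $k(s;7.5)$ is not positive on $(0,1/8]$. At $s=1/8$ one finds $k\approx 1+0.4574-1.1511-0.3880\approx-0.08$, and $k$ changes sign near $s\approx 0.086$. Since $H(s,A,x)<\frac{4}{3}x^2-\frac{7sA}{135}<0$ for small $x$, this forces $L(s;7.5)=0$ on roughly the last third of the interval.

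Second, on the remaining range $L$ is not of order $1$ to $2$. It decreases from $L(0^+)=2$ to $0$, and my own numerical evaluation (not taken from the paper) gives $I(7.5)\approx 0.07$ rather than $0.1$--$0.2$. So the margin over $1/16$ is only about ten percent, and the losses built into your scheme are of the same size:
\begin{itemize}
\item Even granting monotonicity in $s$, a right-endpoint Riemann sum loses up to $2\,\Delta s$, which forces $\Delta s$ of order $10^{-3}$.
\item Replacing $Q$ by $\max(1,2x,Q_{\lambda_1})$ with a single intermediate $\lambda_1$, as you suggest, replaces the concave profile $c_1$ by a piecewise linear interpolant. This destroys the behaviour $1+\lambda\approx 2\sqrt{1-c_1(\lambda)}$ near $\lambda=-1$. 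By my rough estimate, for $\lambda_1=0$ this alone costs about ten percent of the integral, essentially the whole margin.
\end{itemize}
As written, the decisive quantitative step is missing. The plan would at best become a fine, certified computer verification, which you have not carried out.

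What you are missing is that the inner infimum over $x$ can be bounded in closed form, which removes what you call the main obstacle. The paper proceeds as follows.
\begin{itemize}
\item It uses the tangent-line bound $q+\frac{4sA}{27q^2}\ge\alpha q+\frac{4sA}{9}$, with $\alpha=1-\frac{8sA}{27}$, to get $H\ge\alpha Q_\lambda(x)-1+\frac{11}{45}sA$ for every $\lambda$.
\item Take any $\eta\in[0,\alpha]$ with $\frac{7}{135}sA+\eta\le[k]_+$, and choose $\lambda$ with $\alpha c_1(\lambda)=\alpha-\eta$. This yields $[k]_++[H]_+\ge(1+\lambda)\alpha x$ for all $x>0$.
\item Hence $L\ge(1+\lambda)\alpha\ge 2\sqrt{\eta\alpha}+\frac{2}{5}\eta\bigl(1-\frac{\eta}{\alpha}\bigr)$, via an explicit polynomial inequality.
\item With $s=r^3$ and the explicit admissible choice $\eta(r)=(1-\frac{17}{7}r)(1-\frac{2}{7}r)$ on $0\le r\le\frac{7}{17}$, all integrals are elementary and give $I(7.5)\ge\frac{8}{135}+\frac{1}{270}=\frac{1}{16}+\frac{1}{2160}$.
\end{itemize}
The tiny surplus shows there is no room for the slack your heuristic assumes.
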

\begin{proof}
	The Proof of Lemma~\ref{lem:actual_int} is provided in Subsection~\ref{subsec:2}.
\end{proof}
Assuming Lemmas \ref{lem:Lip_L} and \ref{lem:actual_int} for now we prove
\begin{theorem}[Copy of Theorem~\ref{thrm:main}]
	If \(E(A)\) is attained, then \(A<7.5\).
\end{theorem}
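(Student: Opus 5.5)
We argue by contradiction: assume \(E(A)\) is attained for some \(A\in[7.5,8]\) (for \(A>8\) Proposition~\ref{prop:FKN-bound} already excludes existence), and let \(\Omega\) be a minimizer. The plan is to sharpen the identity \eqref{eq:main_FKN} by inserting the lower bound of Lemma~\ref{lem:Main} on the set of slices where \(s_{\nu,t}\le 1/8\). Since \(A\le 8\), \(s\le 1/8\) implies \(sA\le 1\), so Lemma~\ref{lem:Main} applies there. Lemma~\ref{lem:Main} is only stated for \(s\) with \(sA\le1\), but \(L\) is defined on all of \((0,1/8]\) via \(k\) and \(Q\), so restricting to \(s\le 1/8\) is consistent.

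\textbf{Step 1 (lower bound on the averaged deficit).} Fix \(\nu\). By Lemma~\ref{lem:Main} and \(g_{\nu,t}\ge0\) elsewhere,
\[
\int_\R g_{\nu,t}\,dt\;\ge\;\int_{\{t:\,s_{\nu,t}\le 1/8\}}\sigma_{\nu,t}\,L(s_{\nu,t};A)\,dt .
\]
Now change variables. As in the proof of Lemma~\ref{lem:Main}, \(t\mapsto a^+_{\nu,t}\) is absolutely continuous and nonincreasing with derivative \(-\sigma_{\nu,t}\). Its range is \((0,A)\). The set \(\{s_{\nu,t}\le1/8\}\) splits into the two tails \(a^+\le A/8\) and \(a^+\ge 7A/8\). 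On each tail, the area formula (coarea in one dimension) turns \(\int \sigma_{\nu,t}L(s_{\nu,t};A)\,dt\) into \(\int L(a/A;A)\,da\) over \(a\in(0,A/8)\), respectively over \(A-a\in(0,A/8)\). Both equal \(A\int_0^{1/8}L(s;A)\,ds=A\,I(A)\). Hence \(\int_\R g_{\nu,t}\,dt\ge 2A\,I(A)\) for every \(\nu\). Averaging over \(\nu\) and comparing with \eqref{eq:main_FKN} gives
\[
\frac{A(8-A)}{4}\;\ge\;2A\,I(A),\qquad\text{i.e.}\qquad I(A)\le\frac{8-A}{8}.
\]
Measurability of \(s\mapsto L(s;A)\) will be needed here. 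Lemma~\ref{lem:Lip_L} and the continuity of \(k\) and \(Q\) in \(s\) should suffice for this, or it can be handled via lower semicontinuity.

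\textbf{Step 2 (bootstrap via Lipschitz bound).} For \(A\in[7.5,8]\), Lemma~\ref{lem:Lip_L} with \(A_1=7.5\) and \(A_2=A\) gives
\[
I(A)\ge I(7.5)-\frac{\sqrt5}{7}(A-7.5)\int_0^{1/8}s^{-1/6}\,ds
= I(7.5)-\frac{\sqrt5}{7}\cdot\frac65\,8^{-5/6}(A-7.5).
\]
Combining this with Step 1 and Lemma~\ref{lem:actual_int}, we get
\[
\frac1{16}-c\,(A-7.5)<\frac{8-A}{8},\qquad c=\frac{6\sqrt5}{35}\,8^{-5/6}\approx0.068 .
\]
Write \(A=7.5+\delta\) with \(\delta\in[0,1/2]\). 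The inequality becomes \(\tfrac1{16}-c\delta<\tfrac1{16}-\tfrac{\delta}{8}\), that is, \(\delta(1/8-c)<0\). This is impossible since \(c<1/8\). This contradiction proves the theorem.

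The main obstacle is Step 1: justifying the change of variables rigorously. In particular, \(\sigma_{\nu,t}\) may vanish on intervals, and the a.e.\ qualifications of Lemma~\ref{lem:Main} must be handled. The monotone absolutely continuous structure of \(a^+_{\nu,t}\) and the one-dimensional area formula \(\int h(a(t))|a'(t)|\,dt=\int h(a)\,da\) settle both issues, since \(h=L(\cdot/A;A)\ge0\). The rest is arithmetic.
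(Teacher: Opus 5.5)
Your proposal is correct and is essentially the paper's proof. You use the same Cavalieri change of variables on the two tails of \(a^+_{\nu,t}\), combined with \eqref{eq:main_FKN}, and then the Lipschitz bound of Lemma~\ref{lem:Lip_L} integrated over \((0,1/8)\) together with Lemma~\ref{lem:actual_int}; your \(8c\) is exactly the paper's contraction constant \(\frac{6\sqrt{10}}{35}<1\). The differences are cosmetic: you restrict directly to slices with \(s\le 1/8\) instead of taking \(sA\le1\) and then discarding \((1/8,1/A]\), and you write the bootstrap as an explicit contradiction rather than through the monotonicity of \(\Phi(A)-A\).
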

\begin{proof}
	If $A<7.5$ there is nothing to show, so we assume \(A\in[7.5,8]\). Fix \(\nu\in\Sph^2\), retain the original orientation of the half-spaces, and write \begin{equation*}
		a_\nu(t):=a^+_{\nu,t} =\abs{\Omega\cap H_{\nu,t}^+} =\abs{\Omega^+_{\nu,t}},
	\end{equation*}
	to emphasize that this is a function of \(t\in\R\). We then have, due to Cavalieri's principle
	\begin{equation*}
		a_{\nu}(t) = \int_{t}^\infty \sigma_{\nu,\tau} d\tau,\quad  a_\nu'(t) = - \sigma_{\nu,t}, \quad \text{ for a.e. } t\in \R.
	\end{equation*}
	Moreover
	\begin{equation*}
		\lim_{t\to -\infty} a_{\nu,t}^+ = A, \quad \lim_{t\to +\infty} a_{\nu,t}^+ = 0,
	\end{equation*}
	and the condition \(0\leq s_{\nu,t}A \leq 1\) is equivalent to
	\begin{equation*}
		a_{\nu}(t) \in \left[0, 1\right] \cup [A-1,A].
	\end{equation*}
	Therefore by introducing \(u(t)=a_\nu(t)\) with \(du=-\sigma_{\nu,t} dt\) we find for every fixed \(\nu \in \Sph^2\)
	\begin{equation}\label{eq:cav_part}
		\begin{split}
			\int_{\{t:s_{\nu,t}A\leq 1\}} \sigma_{\nu,t} L\left(s_{\nu,t};A\right) dt & = \int_0^{1} L\left(\frac{u}{A};A\right) du + \int_{A-1}^A L\left(1-\frac{u}{A};A\right) du \\
			                                                                          & = 2A \int_{0}^{\frac{1}{A}} L(s;A) ds
		\end{split}
	\end{equation}
	Combining \eqref{eq:cav_part} with \eqref{eq:main_FKN} by application of Lemma~\ref{lem:Main} we find
	\begin{equation*}
		2A \int_{0}^{\frac{1}{A}} L(s;A) ds \leq \fint_{\Sph^2} \int_\R g_{\nu,t} dt d\nu = \frac{A(8-A)}{4},
	\end{equation*}
	or equivalently
	\begin{equation}\label{eq:start_bootstrap}
		A\leq 8-8 \int_{0}^{\frac{1}{A}} L(s;A) ds.
	\end{equation}
    Let
    \begin{equation}
        I(A) := \int_{0}^{1/8} L(s;A) ds, \quad \Phi(A) := 8-8I(A).
    \end{equation}
	Since \(A\leq 8\) by assumption and \(L\geq 0\) we then conclude from \eqref{eq:start_bootstrap}
    \begin{equation}
        A \leq  8-8 \int_{0}^{\frac{1}{A}} L(s;A) ds \leq 8-8I(A) = \Phi(A).
    \end{equation}
	By Lemma~\ref{lem:Lip_L}, for \(15/2\leq A_1\leq A_2\leq8\),
	\begin{equation*}
		0
		\leq
		L(s;A_1)-L(s;A_2)
		\leq
		\frac{\sqrt5}{7}s^{-1/6}(A_2-A_1).
	\end{equation*}
	Integrating over \((0,1/8)\) gives
	\begin{equation*}
		0
		\leq
		I(A_1)-I(A_2)
		\leq
		\frac{3\sqrt{10}}{140}(A_2-A_1).
	\end{equation*}
	Consequently,
	\begin{equation}\label{eq:Phi-contraction}
		0
		\leq
		\Phi(A_2)-\Phi(A_1)
		\leq 8(I(A_1)-I(A_2))\leq
		\frac{6\sqrt{10}}{35}(A_2-A_1), \quad \text{ with } \frac{6\sqrt{10}}{35}<1.
	\end{equation}
	Thus, \(A\mapsto \Phi(A)-A\) is strictly decreasing on \([7.5,8]\). Therefore, for any \(B\in [7.5,8]\) with \(\Phi(B)<B\) we conclude \(A<B\). To prove \(A<B=7.5\) it therefore suffices to show
	\begin{equation}\label{eq:last_step}
		I(7.5) = \int_0^{1/8}L(s;7.5) ds > \frac{1}{16},
	\end{equation}
	since then
	\begin{equation*}
		\Phi(7.5)= 8-8I(7.5) < 8-\frac{1}{2}=7.5.
	\end{equation*}
	Consequently, the assertion of the main theorem follows from Lemmas~\ref{lem:Lip_L} and~\ref{lem:actual_int}.
\end{proof}

\subsection{Proof of Lemma~\ref{lem:Lip_L}}\label{subsec:1}
\begin{proof}
	For \(Q\) defined in Lemma~\ref{lem:Q_lemma} we set
	\begin{equation*}
		H(s,A,x):= Q(x) + \frac{4sA}{27 Q(x)^2} - \left(1+\frac{sA}{5}\right),
	\end{equation*}
	and with \(k\) defined in Lemma~\ref{lem:k_lemma}
	\begin{equation*}
		\ell(s,A,x):=\frac{[k(s;A)]_++[H(s,A,x)]_+}{x},
	\end{equation*}
	such that by definition
	\begin{equation*}
		L(s;A) = \inf_{x>0} \ell(s,A,x).
	\end{equation*}
	We first show that for \(A\geq 7\)
	\begin{equation}\label{eq:x_0_bnd}
		L(s;A) = \inf_{x\geq x_\ast} \ell(s,A,x), \quad \text{ for } \quad  x_\ast(s)= 7 \left(\frac{s}{180}\right)^{1/2}.
	\end{equation}
	For this, we start by showing
	\begin{equation}\label{eq:Q_est}
		1\leq Q(x) \leq 1+\frac{4}{3}x^2
	\end{equation}
	The lower bound follows by choosing \(\lambda=-1\) in the Definition of \(Q\). For the upper bound, note that
	\begin{equation*}
		Q(x) = \sup_{u\in[0,2]}\left(ux + \left(\frac{(2-u)^2(1+u)}{4}\right)^{1/3}\right)
	\end{equation*}
	and, by cubing and expanding, for \(u\in[0,2]\) one has
	\begin{equation*}
		\left(\frac{(2-u)^2(1+u)}{4}\right)^{1/3}\leq 1-\frac{3u^2}{16}.
	\end{equation*}
	Therefore
	\begin{equation*}
		Q(x) \leq \sup_{u\in[0,2]}\left(1+ux - \frac{3u^2}{16}\right)
	\end{equation*}
	and using \(2ab\leq a^2+b^2\) yields
	\begin{equation*}
		ux\leq \frac{3}{16}u^2  + \frac{4}{3}x^2.
	\end{equation*}
	Therefore, we have the inequality in \eqref{eq:Q_est}. As a consequence
	\begin{equation*}
		H(s,A,x) < \frac{4}{3}x^2 - \frac{7sA}{135}
	\end{equation*}
	and therefore \(H(s,A,x)<0\) for \(x\leq x_\ast\) such that
    \begin{equation}
        \ell(s,A,x)= \frac{[k(s;A)]_+}{x}\geq \frac{[k(s;A)]_+}{x_\ast}, \quad x\leq x_\ast.
    \end{equation}
    Since \(H(s,A,x_\ast)<0\), the right-hand side equals \(\ell(s,A,x_\ast)\). It follows that for $s\neq 0$
\begin{equation*}
    L(s;A)
    =
    \inf_{x\geq x_\ast}\ell(s,A,x).
\end{equation*}
	We continue by showing that \(k(s,A)\) decreases in \(A\). Indeed
	\begin{equation*}
		\partial_A k(s,A) = \frac{s-c_0(s)}{5}
	\end{equation*}
	and for \(sA\leq 1\) and consequently \(s<1/3\) we have \(c_0(s)>s\). Using that \([\cdot]_+\) is \(1\)-Lipschitz we then have
	\begin{equation}\label{eq:k_lip}
		\begin{split}
			0 & \leq [k(s;A_1)]_+ - [k(s;A_2)]_+ \\
			  & \leq k(s;A_1) - k(s;A_2)         \\
			  & =  \frac{A_2-A_1}{5}(c_0(s)-s).
		\end{split}
	\end{equation}
	Also \(H(s,A,x)\) is decreasing in \(A\) since
	\begin{equation*}
		H(s,A,x) = Q(x)-1 + sA\left(\frac{4}{27Q(x)^2}-\frac{1}{5}\right)
	\end{equation*}
	and with \(Q(x)\geq 1\)
	\begin{equation*}
		\frac{4}{27Q(x)^2}-\frac{1}{5} <0.
	\end{equation*}
	Using again that  \([\cdot]_+\) is \(1\)-Lipschitz we find
	\begin{equation}\label{eq:H_lip}
		\begin{split}
			0 & \leq [H(s,A_1,x)]_+ - [H(s,A_2,x)]_+                      \\
			  & \leq H(s;A_1,x) - H(s;A_2,x)                              \\
			  & = s(A_2-A_1)\left(\frac{1}{5} - \frac{4}{27Q(x)^2}\right).
		\end{split}
	\end{equation}
	Using \eqref{eq:k_lip} and \eqref{eq:H_lip} we find
	\begin{equation*}
		0\leq  \ell(s,A_1,x)- \ell(s,A_2,x) \leq \frac{A_2-A_1}{x}\left(\frac{c_0(s)}{5} - \frac{4s}{27 Q(x)^2}\right) \leq \frac{c_0(s)}{5x}(A_2-A_1)
	\end{equation*}
    and consequently
    \begin{equation}
        \inf_{x\geq x_\ast}  \ell(s,A_1,x) - \inf_{x\geq x_\ast} \ell(s,A_2,x) \leq \sup_{x\geq x_\ast}\left( \ell(s,A_1,x)- \ell(s,A_2,x)\right) \leq \frac{c_0(s)}{5x_\ast}(A_2-A_1). 
    \end{equation}
	By definition of $L$, we then arrive at
	\begin{equation*}
		L(s;A_1)-L(s;A_2) \leq \frac{c_0(s)}{5 x_\ast} (A_2-A_1).
	\end{equation*}
	Using the definition of \(x_\ast\) in \eqref{eq:x_0_bnd} and the estimate on \(c_0\) defined in \eqref{eq:def_c0}
	\begin{equation*}
		c_0(s) =  (3s)^{-2/3}[1-(1-s)^{5/3}] \leq \frac{5}{3^{5/3}} s^{1/3}
	\end{equation*}
	we arrive at
	\begin{equation*}
		L(s;A_1)-L(s;A_2) \leq \frac{\sqrt{180}}{3^{5/3} \cdot 7}  s^{-1/6}   (A_2-A_1) < \frac{\sqrt{5}}{7} s^{-1/6} (A_2-A_1).
	\end{equation*}
\end{proof}
\subsection{Proof of Lemma~\ref{lem:actual_int}}\label{subsec:2}
Before integrating, we derive a suitable lower bound on \(L(s;A)\). For \(H\) defined in the preceding proof we have
\begin{equation*}
	L(s;A) = \inf_{x>0} \frac{[k(s;A)]_++[H(s,A,x)]_+}{x},
\end{equation*}
and
\begin{equation*}
	H(s,A,x) =Q(x) + \frac{4sA}{27 Q(x)^2} - \left(1+\frac{sA}{5}\right).
\end{equation*}
We use the estimate (for \(sA\leq 1\))
\begin{equation*}
	q + \frac{4sA}{27 q^2} \geq \alpha q + \frac{4sA}{9}, \quad \alpha := 1-\frac{8sA}{27} >0
\end{equation*}
to find
\begin{equation*}
	H(s,A,x)\geq \alpha Q(x) + \frac{4sA}{9} - (1+\frac{sA}{5}).
\end{equation*}
By definition of \(Q_\lambda\) we have for any \(\lambda\in [-1,1]\) then
\begin{equation}\label{eq:lambda_back}
	H(s,A,x)\geq \alpha Q_\lambda(x) +\frac{11}{45}sA-1.
\end{equation}
with
\begin{equation}\label{eq:q_lambda}
	Q_\lambda(x)= (1+\lambda)x +  \left(\frac{2-3\lambda+\lambda^3}{4}\right)^{1/3} = (1+\lambda)x +  c_1(\lambda).
\end{equation}
Inserting \eqref{eq:q_lambda} into \eqref{eq:lambda_back} to find
\begin{equation}
	H(s,A,x)\geq (1+\lambda)\alpha x+c_1(\lambda)\alpha +\frac{11}{45}sA-1.
\end{equation}
Suppose there exists \(\eta\in [0,\alpha]\) with
\begin{equation*}
	\frac{7}{135}sA +\eta \leq [k(s;A)]_+.
\end{equation*}
We then can find \(\lambda = \lambda(\eta)\in [-1,1]\) such that
\begin{equation*}
	\alpha c_1(\lambda)= \alpha-\eta,
\end{equation*}
and arrive at
\begin{equation}\label{eq:HwithEta}
	\begin{split}
		H(s,A,x) & \geq (1+\lambda)\alpha x+\alpha-\eta +\frac{11}{45}sA-1   \\
		         & =(1+\lambda)\alpha x -\left(\frac{7}{135}sA +\eta\right).
	\end{split}
\end{equation}
Since \(\eta\in [0,\alpha]\) is admissible we conclude from \eqref{eq:HwithEta} 
\begin{equation}\label{eq:low_bnd_on_LSA}
	L(s;A) \geq \inf_{x>0} \frac{[k(s;A)]_++[(1+\lambda(\eta))\alpha x- [k(s;A)]_+]_+}{x} \geq (1+\lambda)\alpha.
\end{equation}
Recall that in the expression above \(\lambda=\lambda(\eta)\) is function of \(\eta/\alpha\) uniquely determined by
\begin{equation}\label{eq:defines_lambda}
	c_1(\lambda)=\left(\frac{2-3\lambda+\lambda^3}{4}\right)^{1/3}=1-\frac{\eta}{\alpha}.
\end{equation}
We aim to find a lower bound on \(u(\lambda)=1+\lambda \in [0,2]\) in terms of \(z^2=\eta/\alpha \in [0,1]\). Inserting \(u,z\) into \eqref{eq:defines_lambda} we equivalently have
\begin{equation}\label{eq:defining_lambda_uz}
	u^2(3-u)=4z^2(3-3z^2+z^4).
\end{equation}
The mapping \(h(u)=u^2(3-u)\) is increasing in \(u\in [0,2]\) and thus to prove \(u\geq u_0\) for \(u_0\in [0,2]\) it suffices to show \(h(u)\geq h(u_0)\). We choose
\begin{equation*}
	u_0=2z+\frac{2}{5}z^2(1-z^2),
\end{equation*}
then \(u_0\in [0,2]\) and show
\begin{equation}\label{eq:proof_low_bnd}
	4z^2(3-3z^2+z^4)-h(u_0)  \geq 0.
\end{equation}
Expanding \(h(u_0)\) in \eqref{eq:proof_low_bnd} we find the equivalent expression
\begin{equation*}
	-\frac{4}{125}(z-1)^2z^3(2z^7+4z^6-34z^4-47z^3+40z-100)\geq 0
\end{equation*}
which is for \(z\in\{0,1\}\) true and thus for \(z\in(0,1)\) equivalent to
\begin{equation*}
	2z^7+4z^6-34z^4-47z^3+40z-100 \leq  0
\end{equation*}
which is immediate for \(z\in[0,1]\). Consequently \eqref{eq:proof_low_bnd} holds and from \eqref{eq:defining_lambda_uz} we find
\begin{equation}\label{eq:low_bnd_on_lambda}
	1+\lambda = u \geq u_0=2\left(\frac{\eta}{\alpha}\right)^{1/2}+\frac{2}{5}\frac{\eta}{\alpha}\left(1-\frac{\eta}{\alpha}\right).
\end{equation}
Inserting \eqref{eq:low_bnd_on_lambda} into the right hand side of \eqref{eq:low_bnd_on_LSA} we arrive at
\begin{equation}\label{eq:Lbnd}
	L(s;A)\geq 2\sqrt{\eta\alpha}+\frac{2}{5}\eta\left(1-\frac{\eta}{\alpha}\right).
\end{equation}
We fix now \(A=15/2\) and write \(s=r^3\). We provide an admissible \(\eta=\eta(r)\). Recall that \(\alpha=\alpha(r) =1- \frac{20}{9} r^3  \) and we earlier supposed
\begin{equation}\label{eq:conditions}
	\frac{7}{18} r^3 +\eta(r) \leq \left[k\left(r^3;\frac{15}{2}\right)\right]_+, \quad  \eta(r)\in[0,\alpha(r)].
\end{equation}
We choose
\begin{equation}\label{eq:choice_of_eta}
	\eta(r) = \left(1-\frac{17}{7}r\right)\left(1-\frac{2}{7}r\right).
\end{equation}
Clearly \(\eta(r)\geq0\) on this interval, while
\begin{equation*}
	\alpha(r)-\eta(r) = -\frac{1}{441}r\left(980r^2+306r-1197\right)\geq 0, \quad 0\leq r \leq\frac{7}{17}.
\end{equation*}
We now verify the first condition in \eqref{eq:conditions}. From the definition of \(k(r^3;7.5)\) in Lemma~\ref{lem:k_lemma} we have
\begin{equation}\label{eq:k_est}
	\begin{split}
		k(r^3;7.5) -  \frac{7}{18}r^3 & =1+r(1-r^3)^{2/3}-\frac{9}{2}c_0(r^3) + \frac{10}{9}r^3 \\
		                              & \geq 1 +r-r^4-\frac{9}{2}c_0(r^3) + \frac{10}{9}r^3,
	\end{split}
\end{equation}
with
\begin{equation}\label{eq:c_0_est}
	c_0(r^3) = 3^{-2/3}r^{-2}[1-(1-r^3)^{5/3}] \leq \frac{5}{3^{5/3}}r- \frac{5}{3^{8/3}}r^4
\end{equation}
where we have used Taylor's theorem and that the third derivative of
\begin{equation*}
	x\mapsto1-(1-x)^{5/3}
\end{equation*}
is negative. Consequently by inserting \eqref{eq:c_0_est} into \eqref{eq:k_est} we arrive at
\begin{equation*}
	\begin{split}
		k(r^3;7.5) -  \frac{7}{18}r^3 & \geq1 +\left(1- \frac{45}{2\cdot 3^{5/3}}\right) r  + \frac{10}{9}r^3+ \left(\frac{45}{2\cdot 3^{8/3}}-1\right)r^4 \\
		                              & \geq 1 +\left(1- \frac{45}{2\cdot 3^{5/3}}\right) r  + \frac{10}{9}r^3
	\end{split}
\end{equation*}
For our choice of \(\eta\) we have
\begin{equation*}
	\eta(r)=1-\frac{19}{7}r+\frac{34}{49}r^2.
\end{equation*}
Thus, to prove the first condition in \eqref{eq:conditions} it suffices to check
\begin{equation*}
	1+\frac{19}{7}- \frac{45}{2\cdot 3^{5/3}}-\frac{34}{49}r + \frac{10}{9}r^2\geq 0
\end{equation*}
which holds for any \(r\in \R\). This completes the proof of the lower bound in \eqref{eq:Lbnd} at \(A=7.5\) and for \(\eta\) chosen in \eqref{eq:choice_of_eta} for any \begin{equation*}
	0< s=r^3\leq \left(\frac{7}{17}\right)^3
\end{equation*}
We now estimate the integral \(I(7.5)\). Note by using \eqref{eq:Lbnd} together with $\alpha(r)-\eta(r)\geq 0$ and $\alpha(r)\leq 1$ we have
\begin{equation*}
	\begin{split}
		I(7.5) & =\int_{0}^{1/8} L\left(s;\frac{15}{2}\right) ds                                                                       \\
		       & \geq 6\int_0^{\frac{7}{17}} r^2\sqrt{\alpha(r)\eta(r)}dr  + \frac{6}{5}\int_0^{7/17} r^2\eta(r)\frac{\alpha(r)-\eta(r)}{\alpha(r)}dr\\
                &\geq 6\int_0^{\frac{7}{17}} r^2\sqrt{\alpha(r)\eta(r)}dr  + \frac{6}{5}\int_0^{7/17} r^2\eta(r)(\alpha(r)-\eta(r))dr.
	\end{split}
\end{equation*}
For the first integral, we use the inequality
\begin{equation*}
	\sqrt{\alpha(r)\eta(r)} \geq \left(1-\frac{32}{27}r^3\right)\left(1-\frac{16}{105}r\right)\sqrt{1-\frac{17}{7}r}, \quad r\in [0,7/17].
\end{equation*}
The inequality follows from the following two inequalities that follow directly by squaring on the relevant interval:
\begin{equation}
    \sqrt{1- \frac{20}{9}r^3} \geq 1-\frac{32}{27}r^3, \quad \text{ and } \quad \sqrt{1- \frac{2}{7}r} \geq 1-\frac{16}{105}r.
\end{equation}
The remaining integrals after this estimate can be computed explicitly, and we find
\begin{equation*}
	J_1=6\int_0^{\frac{7}{17}} r^2\sqrt{\alpha(r)\eta(r)}dr \geq \frac{8}{135}.
\end{equation*}
Expanding the polynomial integrand and integrating termwise gives
\begin{equation*}
	J_2= \frac{6}{5}\int_0^{7/17} r^2\eta(r)(\alpha(r)-\eta(r))dr \geq \frac{1}{270}
\end{equation*}
and consequently
\begin{equation*}
	I(7.5)\geq \frac{8}{135}+\frac{1}{270}=\frac{1}{16} +\frac{1}{2160}> \frac{1}{16}.
\end{equation*}
\qed 
\section*{Acknowledgments}
I thank Martin Dam Larsen, Vincent Louatron and Léo Morin for helpful discussions. I acknowledge support by the ERC
Advanced Grant MathBEC, grant agreement no.~101095820, and by the
VILLUM Foundation, grant no.~10059.

\bibliographystyle{plain}
\bibliography{ref}

\end{document}